\def\clo#1{\overline{#1}}
\def\text#1{\mbox{#1}}
\newtheorem{theorem}{Theorem}
\newtheorem{lemma}{Lemma}
\providecommand{\keywords}[1]{
\textbf{\textit{Keywords---}} #1}
\title{\textbf{Simultaneous determination of wave speed, diffusivity and nonlinearity in the Westervelt equation using complex time-periodic solutions}\footnote{The work of B. Palacios was partially supported by Agencia Nacional de Investigaci\'on y Desarrollo (ANID), Grant FONDECYT Iniciaci\'on N$^\circ$11220772. The work of S. Acosta was partially supported by NIH award 1R15EB035359-01A1. S. Acosta would like to thank the support and research-oriented environment provided by Texas Children's Hospital.}}
\author[1]{Sebastian Acosta}
\author[2]{Benjamin Palacios}
\affil[1]{\small Department of Pediatrics, Baylor College of Medicine and Texas Children's Hospital, Houston, TX, USA}
\affil[2]{\small Department of Mathematics, Pontificia Universidad Cat\'olica de Chile, Santiago, Chile}
\begin{document}
\maketitle

\begin{abstract}
We consider an inverse problem governed by the Westervelt equation with linear diffusivity and quadratic-type nonlinearity. The objective of this problem is to recover all the coefficients of this nonlinear partial differential equation. We show that, by constructing complex-valued time-periodic solutions excited from the boundary time-harmonically at a sufficiently high frequency, knowledge of the first- and second-harmonic Cauchy data at the boundary is sufficient to simultaneously determine the wave speed, diffusivity and nonlinearity in the interior of the domain of interest.
\end{abstract}

\keywords{Inverse problems, nonlinear ultrasound imaging, harmonic imaging, Westervelt equation}


\section{Introduction}
\label{Section.Intro}

Nonlinear ultrasound has gained significant importance in medical applications. This includes enhancing the visualization of blood perfusion in organs and tumors through imaging techniques \cite{Anvari2015, Cosgrove2010,Demi2014a,Szabo2004a,Thomas1998, Webb2003}, ablating abnormal tissues using high-intensity focused ultrasound \cite{Chapelon2016, Knuttel2016,terHaar2016}, and effectively delivering drugs or genetic material with the aid of microscopic agents that respond to ultrasound excitation \cite{Bettinger2016,Bouakaz2016b,Castle2016}. The portability of ultrasound-based technologies further increases their appeal, particularly in surgical settings, where the clinical intervention and blood perfusion could be monitored in the operating room and in real-time.

The nonlinearity in the acoustic field can be induced by the natural process governing ultrasound waves with sufficiently large amplitude (tissue harmonics) or by artificial microbubble agents (contrast harmonics) \cite{Burns2000,Anvari2015,Cosgrove2010,Demi2014a,Eyding2020,Szabo2004a}. Either way, the nonlinearity results in the generation of vibrations at frequencies distinct (higher harmonics) from the incoming ultrasound frequency (fundamental harmonic). By detecting and analyzing these unique frequency components, images can be created that specifically highlight the source of the nonlinearity. In fact, the nonlinearity is produced only at locations where high amplitude of oscillation occurs, such as the axis or focal point of acoustic beams. Therefore, harmonic imaging helps reduce several types of ultrasound artifacts.  It filters out unwanted effects at the original (fundamental) frequency, including those caused by clutter, grating lobes, and side lobes of the beams. Because these artifacts do not create higher harmonic frequencies, they are effectively suppressed.  Additionally, the technique minimizes the impact of multiple scattering and echoes, which generates weak signals that also lack harmonic content due to their low amplitude. Hence, this approach effectively suppresses some of the interference caused by the heterogeneous acoustic properties of biological tissues, improving the clarity and accuracy of the imaging.

Regarding mathematical analysis, the imaging problem governed by the Westervelt equation has received recent attention in the groundbreaking work of Kaltenbacher and Rundell \cite{Kaltenbacher2022,Kaltenbacher2024b,Kaltenbacher2021b,Kaltenbacher2021a,Kaltenbacher2022a,Kaltenbacher2023}. They have shown the identifiability of the nonlinearity and/or wave speed (from finitely many boundary measurements) using linearized models of the inverse problem which under appropriate conditions allows for the application of the inverse function theorem to arrive at local uniqueness for the nonlinear inverse problem. In \cite{Acosta2022}, Acosta, Uhlmann and Zhai showed the determination of the coefficient of nonlinearity from knowledge of the whole Dirichlet-to-Neumann (DtN) map for a lossless Westervelt equation (with quadratic nonlinearity) using Gaussian beam solutions. Uhlmann and Zhang considered a more general type of nonlinearity in the form of a power series expansion \cite{Uhlmann2023}. Eptaminitakis and Stefanov showed that the coefficient of nonlinearity can be reconstructed by reducing the problem to the X-ray transform in the weakly nonlinear regime using high-frequency packets \cite{Eptaminitakis2024}. Recently, Li and Zhang addressed the problem of recovering time-dependent potential and nonlinear coefficient \cite{Li2024b} and the metric of a compact Riemannian manifold \cite{Li2024a} from knowledge of the DtN map associated with Westervelt equation. We refer the reader to \cite{Kaltenbacher2025review2} for a recent overview of forward and inverse problems in nonlinear acoustics.

The objective of the present paper is to formalize some experimental observations and rigorously show that indeed the first-harmonic boundary data of the acoustic field contains enough information to determine the wave speed and the coefficient of diffusivity, and that the second-harmonic boundary data is sufficient to determine the coefficient of nonlinearity. In Section \ref{Section.ForwardProblem}, we introduce the mathematical framework to analyze the Westervelt equation for a boundary source harmonically oscillating at a fixed frequency. Then we show the existence and uniqueness of complex-valued time-periodic solutions in the form of a multi-harmonic Fourier series expansion. In Section 
\ref{Section.InverseProblem}, we state and prove the main mathematical result of this paper regarding the simultaneous determination of the wave speed, diffusivity and nonlinearity from knowledge of the first- and second-harmonic boundary data. Finally in Section \ref{Section.Discussion} we offer some remarks about the limitations of our work and possible extensions.

\section{Forward problem}
\label{Section.ForwardProblem}

In this section we establish the well-posedness of the forward problem for the Westervelt equation with time-harmonic boundary sources. We modify some arguments developed by Kaltenbacher, Lasiecka and Veljovic in \cite{Kaltenbacher2009,Kaltenbacher2011,Kaltenbacher2021} to construct time-periodic solutions to the Westervelt equation for a particular type of boundary condition. We consider a bounded connected domain $\Omega \subset \mathbb{R}^d$, for dimension $d=2,3$, with smooth boundary $\partial \Omega$. We consider a time-periodic Westervelt-type problem of the form
\begin{align}
    \mathcal{W}(u) = \partial_{t}^2 \left( u - \alpha u^2 \right) - \nabla \cdot ( \gamma \nabla u) - \nabla \cdot ( \beta  \nabla \partial_{t} u) = 0 \qquad &\text{in $(0,T) \times \Omega$} \label{eqn.001} \\
    ( \gamma + \beta \partial_{t} ) \partial_{\nu} u + \lambda \partial_{t} u + \eta u = g \, e^{-i \omega t} \qquad &\text{on $(0,T) \times \partial \Omega$} \label{eqn.002} \\
    u|_{t=0} = u|_{t=T} \qquad &\text{in $\Omega$} \label{eqn.003} \\
    \partial_{t} u|_{t=0} = \partial_{t} u|_{t=T} \qquad &\text{in $\Omega$} \label{eqn.004}
\end{align}
where $T=2\pi/\omega$ is the period, and $\omega$ is the angular frequency of oscillation for the boundary source (fundamental harmonic). In the boundary condition \eqref{eqn.002}, $\partial_{\nu}$ denotes the partial derivative in the outward normal direction, $\lambda=\lambda(x)$ and $\eta=\eta(x)$ are boundary coefficients, and the function $g : \partial \Omega \to \mathbb{C}$ represents the spatial dependence of the boundary source. The solution $u$ represents the acoustic pressure fluctuations, $\alpha = \alpha(x)$ is the coefficient of nonlinearity,
$\beta = \beta(x)$ is the diffusivity, and
$\gamma = \gamma(x)$ is the squared wave speed at constant hydrostatic pressure. The term $(u - \alpha u^2)$ in \eqref{eqn.001} is derived from a state equation relating the acoustic pressure and density fluctuations in fluids or soft tissues. That form is simply a second order Taylor approximation valid for relatively small pressure fluctuations. Hence, the model has an inherent limitation, namely, it only supports solutions satisfying $2 \alpha u < 1$. Otherwise, the equation degenerates and loses its properties. This mathematical limitation leads us to develop $L^\infty$ estimates by requiring the boundary source $g$ or the coefficient of nonlinearity $\alpha$ to be sufficiently small. Regarding all the media properties listed above, we make the following assumptions. Let the coefficient of nonlinearity $\alpha : \clo{\Omega} \to \mathbb{R}$, the coefficient of diffusivity $\beta : \clo{\Omega} \to \mathbb{R}$, the squared wave speed $\gamma : \clo{\Omega} \to \mathbb{R}$, 
and the boundary coefficients $\lambda : \partial \Omega \to \mathbb{R}$ and $\eta : \partial \Omega \to \mathbb{R}$ all be sufficiently regular and bounded functions (see precise statements below)  satisfying
\begin{align}
0 \leq \alpha(x), \quad \beta_0 \leq \beta(x), \quad \gamma_0 \leq \gamma(x), \quad \lambda_0 \leq \lambda(x), \quad \eta_{0} \leq \eta(x) \qquad \label{eqn.lemma_unique_lower_bounds}
\end{align}
for all $x$ in their domains for some constants $\beta_0, \gamma_0, \lambda_0, \eta_0 > 0$. The case $\alpha \equiv 0$ is not interesting. Therefore we also assume that the coefficient of nonlinearity is not vanishing.

The strategy is to build a solution for \eqref{eqn.001}-\eqref{eqn.004} from the following Fourier ansatz,
\begin{align}
    u^{K}(t,x) = \sum_{k=0}^{K} u_{k}(x) e^{-i k \omega t} \label{eqn.Ansatz}
\end{align}
where the terms $u_{k}$ are defined recursively and then show that in the limit as $K \to \infty$, the sequence $\{ u_{K} \}$ converges in the appropriate normed space to a function $u$ that satisfies \eqref{eqn.001}-\eqref{eqn.004}. Note that by construction, the ansatz \eqref{eqn.Ansatz} is $T$-periodic, so it satisfies \eqref{eqn.003}-\eqref{eqn.004}.

\subsection{Recursive definition for the multi-harmonic ansatz}
We will make repeated use the of Cauchy product formula for two absolutely converging series,
\begin{align}
\left( \sum_{j=0}^{\infty} a_{j} \right) \left( \sum_{k=0}^{\infty} b_{k} \right) = \sum_{k=0}^{\infty} \sum_{l=0}^{k}  a_{l} b_{k-l} \label{eqn.Cauchy}
\end{align}
to express all the terms from the application of the Westervelt operator $\mathcal{W}$ to the limit of the ansatz \eqref{eqn.Ansatz}. So provided that \eqref{eqn.Ansatz} converges absolutely as $K \to \infty$, we would obtain
\begin{align}
\partial_{t} u &= - \sum_{k=0}^{\infty} i k \omega u_{k}  e^{-i k \omega t}. \label{eqn.099} \\
\partial_{t}^{2} u &= - \sum_{k=0}^{\infty} (k \omega)^2 u_{k}  e^{-i k \omega t}. \label{eqn.100} \\
(\partial_{t}^{2} u) u &= - \sum_{k=0}^{\infty} \left( \sum_{l=0}^{k} \omega^2 l^2  u_l u_{k-l} \right) e^{-i k \omega t}, \label{eqn.101} \\
(\partial_{t} u)^2 &= - \sum_{k=0}^{\infty} \left( \sum_{l=0}^{k} \omega^2 l (k-l)  u_l u_{k-l} \right) e^{-i k \omega t}, \label{eqn.102} \\
\nabla \cdot ( \gamma \nabla u ) &= \sum_{k=0}^{\infty} \nabla \cdot ( \gamma \nabla u_{k} )  e^{-i k \omega t}, \label{eqn.103} \\
\nabla \cdot ( \beta \partial_{t} \nabla u ) &= -\sum_{k=0}^{\infty} i k \omega \nabla \cdot ( \beta \nabla u_k )  e^{-i k \omega t}. \label{eqn.104}
\end{align}
Note that \eqref{eqn.101} and \eqref{eqn.102} can be combined to obtain 
\begin{align}
\partial_t^2 ( u^2) = 2 \left( (\partial_{t}^{2} u) u + (\partial_{t} u)^2 \right) = - 2\sum_{k=0}^{\infty} \left( \sum_{l=0}^{k} \omega^2 l \,  k \,  u_l u_{k-l} \right) e^{-i k \omega t} .\label{eqn.105}
\end{align}
Therefore, if the limit of the ansatz existed and were to solve \eqref{eqn.001}, using the orthogonality of the Fourier basis $\{ e^{- i k \omega t} \}_{k=0}^{\infty}$, we would necessarily arrive at the following result for the zeroth term,
\begin{align}
u_0 \equiv 0,&  \label{eqn.110}
\end{align}
due to the boundary condition \eqref{eqn.002} and the condition \eqref{eqn.lemma_unique_lower_bounds}, and for the fundamental harmonic,
\begin{equation} \label{eqn.111}
\begin{aligned}
\omega^2 u_1 + \nabla \cdot  (\gamma - i \omega \beta) \nabla u_1  &= 0, \qquad \text{in $\Omega$}, \\
(\gamma - i \omega \beta) \partial_{\nu} u_1 - (i \omega \lambda - \eta) u_1  &= g, \qquad 
 \text{on $\partial \Omega$}, 
\end{aligned}
\end{equation}
and a recursive formula for the coefficient of the $k^{\rm th}$-order harmonic ($k \geq 2$),
\begin{equation} \label{eqn.112}
\begin{aligned}
(k \omega)^2 u_k + \nabla \cdot ( \gamma - i k \omega \beta) \nabla u_k &=  2 \alpha \, (k \omega)^2 \sum_{l=1}^{k-1} \frac{l}{k}  \,  u_l u_{k-l}, \qquad  \text{in $\Omega$},  \\
( \gamma - i k \omega \beta ) \partial_{\nu} u_k - (i k \omega \lambda - \eta) u_k &= 0, \qquad  \text{on $\partial \Omega$.} 
\end{aligned}
\end{equation}

In other words, $u_1$ satisfies the linear Helmholtz equation in $\Omega$ being forced by $g$ at the boundary $\partial \Omega$. The coefficient of nonlinearity $\alpha$ plays no role in the equation for $u_1$. 
All the other Fourier components satisfy linear Helmholtz equations of increasing frequencies, forced by products of previous terms, and subject to homogeneous boundary conditions.

\subsection{Existence of a solution using the multi-harmonic ansatz}

Here and in the rest of the paper, $H^{s}(D)$ for $s \geq 0$ denotes the Sobolev Hilbert space of functions defined in the domain $D = \Omega$ or $D = \partial \Omega$ having at least $s$ square integrable weak derivatives. Similarly, $W^{s,\infty}(D)$ denotes the Sobolev space of functions defined in the domain $D$ having at least $s$ essentially bounded weak derivatives. We also use the common notation $L^2 = H^0$ and $L^{\infty} = W^{0,\infty}$. Detailed definitions are found in \cite{McLean2000,EvansPDE,Gil-Tru-2001}.

Using the recursive definition \eqref{eqn.111}-\eqref{eqn.112} of the ansatz, we proceed to show its convergence. The main tool is the following frequency-explicit stability estimates for the Helmholtz equation. First, we need a lemma to estimate the norms of the solution $u_1$ in terms of the regularity and size of the boundary source $g$.

\begin{lemma} \label{Thm.Lemma_v1}
Assume that $\beta, \gamma \in W^{1,\infty}(\Omega)$ and $\lambda, \eta \in L^{\infty}(\partial \Omega)$ are real-valued functions satisfying \eqref{eqn.lemma_unique_lower_bounds}. Let $g \in H^{0}(\partial \Omega)$. Suppose that $v \in H^1(\Omega)$ is the weak solution to the following Helmholtz problem,
\begin{equation} \label{eqn.252} 
\begin{aligned}
\nabla \cdot  \mu \nabla v  + \omega^2 v &= 0 \qquad \text{in $\Omega$,}  \\
\mu \partial_{\nu}v - (i \omega \lambda - \eta) v &=  g \qquad \text{on $\partial \Omega$},
\end{aligned}
\end{equation}
where
\begin{align}
\mu =\gamma - i \omega \beta.
\label{eqn.mu_1} 
\end{align}

Then the following estimates hold
\begin{align} 
\omega \| v \|_{H^0(\partial \Omega)} &\leq C \| g \|_{H^{0}(\partial \Omega)} \label{eqn.C0} \\
\omega^{3/2} \| v \|_{H^0(\Omega)} &\leq C \| g \|_{H^{0}(\partial \Omega)} \label{eqn.C1}  \\
\omega \| \nabla v \|_{H^0(\Omega)} &\leq C \| g \|_{H^{0}(\partial \Omega)} \label{eqn.C2} \\
\omega^{1/2} \| \Delta v \|_{H^0(\Omega)} &\leq  C  \| g \|_{H^0(\partial \Omega)}  \label{eqn.C3}
\end{align}
for some constant $C=C(\beta, \gamma, \lambda, \eta)$ independent of $\omega \geq \omega_{o}$ for some fixed $\omega_{o}$. 

Moreover, if $g \in H^{s}(\partial \Omega)$ for $s \geq 1/2$ and $\Omega \subset \mathbb{R}^d$ for $d=2,3$, then $v \in H^2(\Omega) \subset L^{\infty}(\Omega)$ and 
\begin{align} 
\| v \|_{L^{\infty}(\Omega)} \lesssim \| v \|_{H^{2}(\Omega)}  \leq \frac{C}{\omega^{1/2}} \| g \|_{H^{s}(\partial \Omega)} \label{eqn.C4}
\end{align}
for some constant $C=C(\Omega, \beta, \gamma, \lambda, \eta)$ independent of $\omega \geq \omega_{o}$ for some fixed $\omega_{o}$.
\end{lemma}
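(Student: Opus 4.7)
The plan is to derive all four estimates from an energy identity obtained by multiplying \eqref{eqn.252} by $\bar v$, integrating over $\Omega$, and applying Green's identity together with the boundary condition. This produces
\begin{equation*}
-\int_\Omega \mu \, |\nabla v|^2\,dx + \omega^2 \int_\Omega |v|^2\,dx + \int_{\partial\Omega}(i\omega\lambda - \eta)|v|^2\,dS = -\int_{\partial\Omega} g\,\bar v\,dS.
\end{equation*}
Since $\mu = \gamma - i\omega\beta$, taking imaginary parts gives
\begin{equation*}
\omega \int_\Omega \beta \,|\nabla v|^2\,dx + \omega \int_{\partial\Omega}\lambda \,|v|^2\,dS = -\operatorname{Im}\int_{\partial\Omega} g\,\bar v\,dS,
\end{equation*}
and using the positive lower bounds $\beta_{0},\lambda_{0}>0$ on the left, together with Cauchy--Schwarz and Young's inequality with weight proportional to $\omega$ on the right, I would obtain \eqref{eqn.C0} and \eqref{eqn.C2} simultaneously.

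For \eqref{eqn.C1}, I would take the real part of the same identity to express $\omega^2\|v\|_{L^2(\Omega)}^2$ as a sum of a gradient term, a boundary mass term, and $-\operatorname{Re}\int g\bar v$. Inserting the just-proved \eqref{eqn.C0}--\eqref{eqn.C2} controls this by $C\|g\|_{L^2(\partial\Omega)}^{2}/\omega$, hence \eqref{eqn.C1}. For \eqref{eqn.C3}, I would expand the divergence in the strong form as $\mu\,\Delta v = -\omega^{2} v - \nabla\mu\cdot\nabla v$; the pointwise bounds $|\mu|\geq \omega\beta_{0}$ and $|\nabla\mu|\leq |\nabla\gamma| + \omega|\nabla\beta|$ then yield $\|\Delta v\|_{L^2}\leq C\omega\|v\|_{L^2} + C\|\nabla v\|_{L^2}$, and substituting \eqref{eqn.C1}--\eqref{eqn.C2} gives \eqref{eqn.C3}.

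The main obstacle is \eqref{eqn.C4}. My plan is to invoke the standard elliptic regularity estimate for the Neumann Laplacian on smooth bounded domains,
\begin{equation*}
\|v\|_{H^{2}(\Omega)} \leq C\bigl(\|\Delta v\|_{L^{2}(\Omega)} + \|\partial_{\nu} v\|_{H^{1/2}(\partial\Omega)} + \|v\|_{L^{2}(\Omega)}\bigr),
\end{equation*}
whose constant depends only on $\Omega$, and to solve the boundary condition for $\partial_{\nu} v = (g + (i\omega\lambda - \eta)v)/\mu$. The delicate point is extracting a sharp frequency-explicit bound on the Neumann trace: since $1/\mu$ lies in $W^{1,\infty}(\Omega)$ with norm of order $1/\omega$, standard multiplication estimates give $\|g/\mu\|_{H^{1/2}(\partial\Omega)} \lesssim \|g\|_{H^{1/2}(\partial\Omega)}/\omega$, whereas the coefficient $(i\omega\lambda - \eta)/\mu$ remains of order one, so that $\|(i\omega\lambda - \eta)v/\mu\|_{H^{1/2}(\partial\Omega)} \lesssim \|v\|_{H^{1/2}(\partial\Omega)} \lesssim \|v\|_{H^{1}(\Omega)}$ by the trace theorem. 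Combining with the bound $\|v\|_{H^{1}(\Omega)}\lesssim \|g\|_{L^{2}(\partial\Omega)}/\omega$ already obtained from \eqref{eqn.C1}--\eqref{eqn.C2} and with \eqref{eqn.C3}, every term on the right is controlled by $C\|g\|_{H^{1/2}(\partial\Omega)}/\omega^{1/2}$, which yields \eqref{eqn.C4} for $s=1/2$ (and a fortiori for all $s\geq 1/2$). The $L^{\infty}$ bound then follows from the Sobolev embedding $H^{2}(\Omega)\hookrightarrow L^{\infty}(\Omega)$ valid in dimension $d\leq 3$.
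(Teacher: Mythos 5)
Your proposal is correct and follows essentially the same route as the paper: the energy identity from pairing with $\bar v$, imaginary part for \eqref{eqn.C0} and \eqref{eqn.C2}, real part for \eqref{eqn.C1}, the strong form $\mu\Delta v=-\omega^2 v-\nabla\mu\cdot\nabla v$ with $|\mu|\geq\omega\beta_0$ for \eqref{eqn.C3}, and elliptic regularity plus the embedding $H^2(\Omega)\hookrightarrow L^\infty(\Omega)$ for \eqref{eqn.C4}, with identical frequency bookkeeping. The one point of divergence is in \eqref{eqn.C4}: you invoke the Neumann $H^2$ estimate and bound the full Neumann trace, which requires the step $\|(i\omega\lambda-\eta)v/\mu\|_{H^{1/2}(\partial\Omega)}\lesssim\|v\|_{H^{1/2}(\partial\Omega)}$ --- a multiplier property that does not hold for $\lambda,\eta$ merely in $L^\infty(\partial\Omega)$ --- whereas the paper treats the problem as a Robin boundary value problem with $O(1)$ coefficient $(i\omega\lambda-\eta)/\mu$ and data $g/\mu$, absorbing this issue into the cited Robin regularity estimate; either way one should either assume slightly more regularity on $\lambda,\eta$ or justify the boundary-coefficient multiplication more carefully.
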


\begin{proof}
The boundary value problem \eqref{eqn.252} is well-posed in weak and strong formulations for all real $\omega$ due to the positivity condition \eqref{eqn.lemma_unique_lower_bounds} on the dissipative terms $\beta$ and $\lambda$. See details in \cite[Ch. 4]{McLean2000}, \cite[Ch. 2]{Lio-Mag-Book-1972}. Take $v$, the weak solution to \eqref{eqn.252}, and consider weak bilinear form applied to $v$ and $\clo{v}$,
\begin{align}
- \int_{\Omega} \mu | \nabla v |^2 dx +  \int_{\Omega} \omega^2 |v|^2 dx + \int_{\partial \Omega} (i \omega \lambda - \eta) |v|^2 \, dS(x) = - \int_{\partial \Omega} g \clo{v} \, dS(x).  \label{eqn.bilinear_u1}
\end{align}
The imaginary part of \eqref{eqn.bilinear_u1}  leads to
\begin{align}
\omega \int_{\Omega} \beta | \nabla v |^2 dx  + \omega  \int_{\partial \Omega} \lambda |v|^2 dS(x) = - \Im \int_{\partial \Omega} g \clo{v} dS(x) \nonumber
\end{align}
which, after the application of the Cauchy-Schwarz inequality, implies that
\begin{align}
\omega \| \nabla v \|_{H^0(\Omega)}^2  + \omega \| v \|_{H^0(\partial \Omega)}^2 \leq C \| g \|_{H^{0}(\partial \Omega)}  \| v \|_{H^{0}(\partial \Omega)} \nonumber
\end{align}
which renders \eqref{eqn.C0} and \eqref{eqn.C2} for some constant $C=C(\beta, \lambda)$ independent of $\omega > 0$. 

The real part of the bilinear form \eqref{eqn.bilinear_u1} leads to
\begin{align}
- \int_{\Omega} \gamma |\nabla v|^2 dx + \omega^2 \int_{\Omega} |v|^2 dx - \int_{\partial \Omega} \eta |v|^2 dS(x)  = - \Re \int_{\partial \Omega} g \clo{v} \, dS(x) \nonumber
\end{align}
which, after using the Cauchy-Schwarz inequality and the previous results, implies that
\begin{align}
\omega^2 \|  v \|_{H^0(\Omega)}^2  &\leq C \left(  \| \nabla v \|_{H^0(\Omega)}^2 + \| v \|_{H^0(\partial \Omega)}^2 + \| g \|_{H^0(\partial \Omega)} \| v \|_{H^0(\partial \Omega)} \right) \leq C \left( \frac{1}{\omega^2} + \frac{1}{\omega} \right) \| g \|_{H^0(\partial \Omega)}^2 \nonumber
\end{align}
thus, rendering \eqref{eqn.C1} for some other constant $C=C(\beta, \gamma, \lambda, \eta)$ provided that $\omega \geq \omega_{o}$ for some fixed $\omega_o$.
Now we take a sequence $\{ g_{m} \}$ of smooth functions converging to $g$ in the norm of $H^{0}(\partial \Omega)$. These boundary sources induce strong solutions $\{ v_{m} \}$ to \eqref{eqn.252}. Therefore, simply taking the $H^0(\Omega)$-norm of \eqref{eqn.252} leads to
\begin{align}
\| \Delta v_m \|_{H^0(\Omega)} &\leq C \left( \| (\nabla \mu \cdot \nabla v_{m})/\mu \|_{H^{0}(\Omega)} + \| \omega^2  v_m / \mu \|_{H^0(\Omega)} \right) \leq C \left( \frac{1}{\omega} + \frac{ 1 }{ \omega^{1/2} } \right) \| g_m \|_{H^0(\partial \Omega)}    \nonumber
\end{align}
Hence, we obtain the desired result \eqref{eqn.C3} in the limit as $m \to \infty$ for a constant $C=C(\beta, \gamma, \lambda, \eta)$ independent of $\omega \geq \omega_{o}$.

Finally, by the trace theorem, a weak solution $v \in H^{1}(\Omega)$ possesses a Dirichlet trace in $H^{1/2}(\partial \Omega)$. Therefore, assuming that $g \in H^{s}(\partial \Omega)$ for $s \geq 1/2$, the weak solution $v$ possesses a Neumann trace in $H^{1/2}(\partial \Omega)$. Therefore, $v$ is a strong solution to \eqref{eqn.252}. See \cite[Ch.4]{McLean2000}, \cite[Ch.6]{EvansPDE}, or \cite[Ch.8]{Gil-Tru-2001} for details. Therefore, $v$ satisfies a Poisson equation of the type $\Delta v = F$ with a source $F \in H^{0}(\Omega)$ such that $\| F \|_{H^{0}(\Omega)} \leq C  \omega^{-1/2} \| g \|_{H^{0}(\partial \Omega)}$, and boundary condition $\partial_{\nu} v  - (i \omega \lambda - \eta) v /\mu = g/\mu$, where $|(i \omega \lambda - \eta) / \mu| \leq \mathcal{O}(1)$ as $\omega \to \infty$. 
For such a function $v \in H^2(\Omega)$, we have 
\begin{align}
\| v \|_{H^2(\Omega)} \leq C \left( \| F \|_{H^0(\Omega)} + \| g / \mu \|_{H^{1/2}(\partial \Omega)} \right) \leq C \left( \frac{1}{\omega^{1/2}} + \frac{1}{\omega} \right) \| g \|_{H^{1/2}(\partial \Omega)}  \leq \frac{C} {\omega^{1/2}} \| g \|_{H^{s}(\partial \Omega)} \nonumber
\end{align}
for some constant $C=C(\Omega, \beta, \gamma, \lambda, \eta)$ independent of $\omega \geq \omega_{o}$. Now since $H^{2}(\Omega)$ is continuously embedded in $L^{\infty}(\Omega)$ for a bounded domain $\Omega \subset \mathbb{R}^d$ with smooth boundary $\partial \Omega$ for dimension $d=2,3$ \cite{McLean2000, EvansPDE, Gil-Tru-2001}, we obtain \eqref{eqn.C4}.
\end{proof}

A second lemma is needed to estimate the norms of the recursive functions $u_k$ in terms of the regularity and size of a generic interior source $f$.

\begin{lemma} \label{Thm.Lemma_vk}
Assume that $\alpha \in L^{\infty}(\Omega)$, $\beta, \gamma \in W^{1,\infty}(\Omega)$ and $\lambda, \eta \in L^{\infty}(\partial \Omega)$ are real-valued functions satisfying \eqref{eqn.lemma_unique_lower_bounds}. Let $f \in H^{0}(\Omega)$. Suppose that $v \in H^1(\Omega)$ is the weak solution to the following Helmholtz problem
\begin{equation} \label{eqn.202}
\begin{aligned}
\nabla \cdot  \mu_{k} \nabla v + (k \omega )^2  v &= (k \omega)^2 \, 2 \alpha f \qquad \text{in $\Omega$,}  \\
\mu_{k} \partial_{\nu}v - (i k \omega \lambda - \eta) v &= 0 \qquad \text{on $\partial \Omega$},
\end{aligned}
\end{equation}
where 
\begin{align}
\mu_{k} = \gamma - i k \omega \beta.
\label{eqn.mu_rho} 
\end{align}
Then there is a constant $C=C(\Omega,\beta , \gamma, \lambda, \eta)$ independent of $k\geq 2$ and of $\omega \geq \omega_{o}$ for some fixed $\omega_{o}$, such that,
\begin{align} 
\|v\|_{H^{0}(\partial \Omega)} &\leq C (k \omega)^{1/2} \| \alpha f \|_{H^{0}(\Omega)} \label{eqn.D1} \\
\|v\|_{H^{0}(\Omega)} &\leq C \| \alpha f \|_{H^{0}(\Omega)} \label{eqn.D0} \\
\|\nabla v\|_{H^{0}(\Omega)} &\leq C (k \omega)^{1/2} \|  \alpha f \|_{H^{0}(\Omega)} \label{eqn.D2} \\
\|\Delta v\|_{H^{0}(\Omega)} &\leq C k \omega \| \alpha f \|_{H^{0}(\Omega)} \label{eqn.D3}
\end{align}
Moreover, if $\Omega \subset \mathbb{R}^d$ for $d=2,3$, then  $v \in H^2(\Omega) \subset L^{\infty}(\Omega)$ and 
\begin{align}
\| v \|_{L^{\infty}(\Omega)} \lesssim \| v \|_{H^{2}(\Omega)} \leq C k \omega \, \| \alpha f \|_{H^{0}(\Omega)} \label{eqn.D4}
\end{align}
for some other constant $C=C(\Omega, \beta, \gamma, \delta, \lambda)$ independent of $\omega \geq \omega_{o}$ and of $k \geq 2$.
\end{lemma}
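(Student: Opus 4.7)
The plan is to mirror the structure of the proof of Lemma \ref{Thm.Lemma_v1}, adapted to the two novel features of \eqref{eqn.202}: the boundary datum is now homogeneous, while the interior is driven by the large source $(k\omega)^2\,2\alpha f$. The underlying principle is that the dissipative terms $\beta$ and $\lambda$, which appear multiplied by $k\omega$ in the imaginary part of the weak bilinear form, must supply exactly the positivity needed to absorb the $(k\omega)^2$ factor in the source. I would first establish the $H^0(\Omega)$ bound \eqref{eqn.D0}, then bootstrap to obtain \eqref{eqn.D1}--\eqref{eqn.D2}, and finally pass to \eqref{eqn.D3}--\eqref{eqn.D4} via elliptic regularity.

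Concretely, I would test the weak formulation of \eqref{eqn.202} against $\clo{v}$ to obtain
\begin{align*}
-\int_\Omega \mu_k |\nabla v|^2\,dx + (k\omega)^2\int_\Omega |v|^2\,dx + \int_{\partial\Omega}(ik\omega\lambda - \eta)|v|^2\,dS(x) = (k\omega)^2\int_\Omega 2\alpha f\,\clo{v}\,dx.
\end{align*}
The imaginary part, combined with \eqref{eqn.lemma_unique_lower_bounds} and Cauchy--Schwarz, gives
\begin{align*}
k\omega\bigl(\beta_0\|\nabla v\|^2_{H^0(\Omega)} + \lambda_0\|v\|^2_{H^0(\partial\Omega)}\bigr) \leq 2(k\omega)^2\|\alpha f\|_{H^0(\Omega)}\|v\|_{H^0(\Omega)},
\end{align*}
while the real part yields $(k\omega)^2\|v\|^2_{H^0(\Omega)} \leq C(\|\nabla v\|^2_{H^0(\Omega)} + \|v\|^2_{H^0(\partial\Omega)}) + 2(k\omega)^2\|\alpha f\|_{H^0(\Omega)}\|v\|_{H^0(\Omega)}$. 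Substituting the imaginary-part bound into the first grouping produces an inequality of the form $(k\omega)^2\|v\|^2 \leq C\bigl(k\omega + (k\omega)^2\bigr)\|\alpha f\|\|v\|$, whose right-hand side is dominated by its second term for $k\omega\geq\omega_o$; dividing through by $(k\omega)^2\|v\|_{H^0(\Omega)}$ then delivers \eqref{eqn.D0}. Re-injecting \eqref{eqn.D0} into the imaginary-part inequality yields \eqref{eqn.D1} and \eqref{eqn.D2} with the advertised factor $(k\omega)^{1/2}$.

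For \eqref{eqn.D3}, following the strategy used in Lemma \ref{Thm.Lemma_v1}, I would approximate the source by a smooth sequence to produce strong solutions $v_m$, solve the PDE for $\Delta v_m$, and take $H^0(\Omega)$ norms. The crucial observations are the pointwise bound $|\mu_k|\geq k\omega\beta_0$ and the estimate $\|\nabla\mu_k\|_{L^\infty}\leq C k\omega$, so that dividing the equation by $\mu_k$ produces
\begin{align*}
\|\Delta v_m\|_{H^0(\Omega)} \leq C\bigl(\|\nabla v_m\|_{H^0(\Omega)} + k\omega\|v_m\|_{H^0(\Omega)} + k\omega\|\alpha f_m\|_{H^0(\Omega)}\bigr),
\end{align*}
which combined with \eqref{eqn.D0}--\eqref{eqn.D2} gives \eqref{eqn.D3} in the limit. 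For \eqref{eqn.D4}, the Robin condition rewrites as $\partial_\nu v = (ik\omega\lambda-\eta)v/\mu_k$ with multiplier of size $\Order{1}$ uniformly in $k\omega\geq\omega_o$; an $H^2$ elliptic regularity estimate for the associated Poisson problem $\Delta v = F$, as in Lemma \ref{Thm.Lemma_v1}, together with the continuous embedding $H^2(\Omega)\hookrightarrow L^\infty(\Omega)$ valid for $d=2,3$, closes the argument.

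The principal obstacle I anticipate is not any single step but rather the uniform-in-$k$ bookkeeping: the forcing scales as $(k\omega)^2$, the coefficient $\mu_k$ as $k\omega$, and the dissipation in the imaginary part as $k\omega$, so the powers must cancel exactly to yield constants independent of $k$. This is most delicate in the $H^2$ step, where the Neumann trace data $(ik\omega\lambda-\eta)v/\mu_k$ could in principle inject hidden $k$-dependence; the saving grace is that $ik\omega\lambda/\mu_k\to -\lambda/\beta$ as $k\omega\to\infty$, so the multiplier remains $\Order{1}$ uniformly and the $k$-dependence of $\|v\|_{H^2(\Omega)}$ is entirely inherited from $\|\Delta v\|_{H^0(\Omega)}$.
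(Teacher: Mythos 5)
Your proposal is correct and follows essentially the same route as the paper: testing against $\clo{v}$, extracting the imaginary part to control $\|\nabla v\|$ and $\|v\|_{H^0(\partial\Omega)}$ via the dissipative terms $\beta$ and $\lambda$, feeding that into the real part to get \eqref{eqn.D0}, re-injecting to obtain \eqref{eqn.D1}--\eqref{eqn.D2}, and then dividing the equation by $\mu_k$ (with $|\mu_k|\geq k\omega\beta_0$ and $|\nabla\mu_k|\leq Ck\omega$) plus elliptic regularity for \eqref{eqn.D3}--\eqref{eqn.D4}. The only cosmetic difference is that the paper invokes elliptic regularity directly for $v$ rather than passing through a smooth approximating sequence, which it reserves for Lemma~\ref{Thm.Lemma_v1}.
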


\begin{proof}
Since $v$ is the weak solution to \eqref{eqn.202}, the weak bilinear form applied to $v$ and $\clo{v}$ reads
\begin{align}
- \int_{\Omega} \mu_{k} | \nabla v |^2 dx + \int_{\Omega} (k \omega )^2  |v|^2 dx + \int_{\partial \Omega} (i k \omega \lambda -\eta) |v|^2 \, dS(x) = (k \omega)^2 \int_{\Omega}2 \alpha  f \clo{v} \, dx. \label{eqn.bilinear_uk}
\end{align}

The imaginary part of the form \eqref{eqn.bilinear_uk} leads to
\begin{align}
k \omega \int_{\Omega} \beta |\nabla v|^2 dx  + k \omega \int_{\partial \Omega} \lambda  |v|^2 dS(x) = (k \omega)^2 \Im \int_{\Omega} 2 \alpha f \clo{v} dx \nonumber
\end{align}
which, after the application of the Cauchy-Schwarz inequality, implies that
\begin{align}
 \| \nabla v \|_{H^{0}(\Omega)}^{2} \leq C k \omega  \| \alpha f \|_{H^{0}(\Omega)}  \| v \|_{H^{0}(\Omega)} \nonumber
\end{align}
for some constant $C=C(\beta)$ valid for all $\omega > 0$ and all $k \geq 2$. Additionally, 
\begin{align}
\| v \|_{H^{0}(\partial \Omega)}^2 \leq C k \omega \| \alpha f \|_{H^{0}(\Omega)}  \| v \|_{H^{0}(\Omega)} \nonumber
\end{align}
for some constant $C=C(\lambda)$, valid for all $\omega > 0$ and all $k \geq 2$.

Now, the real part of the form \eqref{eqn.bilinear_uk} leads to
\begin{align}
- \int_{\Omega} \gamma |\nabla v|^2 dx + (k \omega)^2 \int_{\Omega} |v|^2 dx  - \int_{\partial \Omega} \eta |v|^2 dS(x) =  (k \omega)^2 \Re \int_{\Omega} 2 \alpha f \clo{v} dx  \nonumber
\end{align}
which, using the previous results and the Cauchy-Schwarz inequality, implies that
\begin{align*}
(k \omega)^2 \| v \|_{H^{0}(\Omega)}^2 &\leq C \left( \| \nabla v \|_{H^{0}(\Omega)}^2 + \| v \|_{H^{0}(\partial \Omega)}^2 + (k \omega)^2 \| \alpha f \|_{H^{0}(\Omega)} \| v \|_{H^{0}(\Omega)} \right) \\ &\leq C \left( (k \omega) + (k \omega)^2 \right) \| \alpha f \|_{H^{0}(\Omega)} \| v \|_{H^{0}(\Omega)} 
\end{align*}
which renders \eqref{eqn.D0} for some other constant $C=C(\beta, \gamma, \lambda, \eta)$ independent of $\omega \geq \omega_{o}$ and $k \geq 2$. Plugging \eqref{eqn.D0} back into the previous inequalities renders \eqref{eqn.D1} and \eqref{eqn.D2}. 

Finally, since $f \in H^0(\Omega)$ and $\alpha \in W^{0,\infty}(\Omega)$, regularity theory for elliptic equations implies that $v \in H^{2}(\Omega)$ is a strong solution to \eqref{eqn.202} \cite{McLean2000,EvansPDE,Gil-Tru-2001}. Therefore, simply taking the $H^{0}(\Omega)$-norm of the differential equation \eqref{eqn.202} leads to 
\begin{align}
\| \Delta v \|_{H^{0}(\Omega)} \leq C \left(  \| (\nabla \mu_{k} \cdot \nabla v) / \mu_{k} \|_{H^{0}(\Omega)} + \| (k \omega)^2 v / \mu_{k} \|_{H^{0}(\Omega)} +  (k \omega)^2  \| \alpha f / \mu_{k} \|_{H^{0}(\Omega)} \right) \nonumber
\end{align}
which yields \eqref{eqn.D3} after the use of the previous inequalities. Due to the boundary condition in \eqref{eqn.202}, and a bootstrapping argument between the regularity of the Neumann and Dirichlet boundary traces, the solution $v$ has a sufficiently regular trace to obtain $\| v \|_{H^{2}(\Omega)} \leq C \| \Delta v \|_{H^{0}(\Omega)} $ for a constant $C=C(\Omega)$. See \cite[Ch.4]{McLean2000}, \cite[Ch.6]{EvansPDE}, or \cite[Ch.8]{Gil-Tru-2001} for details. Finally, since $H^2(\Omega)$ is continuously embedded in $L^{\infty}(\Omega)$ for a smooth domain $\Omega \subset \mathbb{R}^d$ in dimension $d=2,3$ \cite{McLean2000,EvansPDE,Gil-Tru-2001}, we obtain \eqref{eqn.D4}.
\end{proof}

Now we proceed to apply Lemmas \ref{Thm.Lemma_v1} and \ref{Thm.Lemma_vk} to quantify the growth/decay of the terms in the ansatz \eqref{eqn.Ansatz}.

\begin{lemma} \label{Thm.Lemma_u_decay}
Assume that $\alpha \in L^{\infty}(\Omega)$, $\beta, \gamma \in W^{1,\infty}(\Omega)$ and $\lambda, \eta \in L^{\infty}(\partial \Omega)$ are real-valued functions satisfying \eqref{eqn.lemma_unique_lower_bounds}.
Let the sequence $\{ u_{k} \}$ be defined recursively as the weak solutions to the systems \eqref{eqn.111}-\eqref{eqn.112}. If $g \in H^{s}(\partial \Omega)$ for $s \geq 1/2$ and $\Omega \subset \mathbb{R}^d$ for $d=2,3$ is a bounded connected domain with smooth boundary $\partial \Omega$, then 
\begin{itemize}
\item[(a)] If $\| \alpha \|_{L^{\infty}(\Omega)} > 0$, the functions $u_{k} \in H^{2}(\Omega) \subset L^{\infty}(\Omega)$ and satisfy
\begin{align}
\| u_k\|_{H^{0}(\Omega)} & \leq \frac{h_k}{C k \omega \| \alpha \|_{L^{\infty}(\Omega)} } \left( \frac{C^2}{\omega^{1/2}} \| g \|_{H^{s}(\partial \Omega)} \| \alpha \|_{L^{\infty}(\Omega)} \right)^k \label{eqn.normH0_uk} \\
\| u_k\|_{H^{2}(\Omega)} &\leq  \frac{h_k}{C \| \alpha \|_{L^{\infty}(\Omega)}} \left( \frac{C^2}{\omega^{1/2}} \| g \|_{H^{s}(\partial \Omega)} \| \alpha \|_{L^{\infty}(\Omega)} \right)^k \label{eqn.normH2_uk} \\
\| u_k\|_{L^{\infty}(\Omega)} &\leq  \frac{h_k}{C \| \alpha \|_{L^{\infty}(\Omega)}} \left( \frac{C^2}{\omega^{1/2}} \| g \|_{H^{s}(\partial \Omega)} \| \alpha \|_{L^{\infty}(\Omega)} \right)^k \label{eqn.normLinfty_uk}
\end{align}
for some constant $C=C(\Omega,\beta,\gamma,\lambda,\eta)$ independent of $\omega \geq \omega_{o}$ and of $k \geq 1$, where the coefficients $h_k$ are defined recursively as follows
\begin{align}
h_1 = 1, \qquad \text{and} \quad h_k = \sum_{l=1}^{k-1} h_{l} h_{k-l}, \qquad \text{for $k\geq 2$}. \label{eqn.h_seq}
\end{align}

\item[(b)] 
The sequence $h_{k}$ defined in \eqref{eqn.h_seq} satisfies
\begin{align}
h_k \leq \frac{5^{k-1}}{k^2} \qquad \text{for all $k \geq 1$.} \label{eqn.h_seq03}
\end{align}
\end{itemize}
\end{lemma}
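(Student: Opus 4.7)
The plan is to prove part (a) by induction on $k$, invoking Lemmas \ref{Thm.Lemma_v1} and \ref{Thm.Lemma_vk} applied to the recursive system \eqref{eqn.111}--\eqref{eqn.112}, and to prove part (b) by a separate induction that reduces to a harmonic-sum estimate. For brevity set $A := (C^2/\omega^{1/2})\, \|g\|_{H^s(\partial\Omega)}\, \|\alpha\|_{L^\infty(\Omega)}$. For the base case $k=1$ of (a), the function $u_1$ satisfies exactly the problem \eqref{eqn.252} of Lemma \ref{Thm.Lemma_v1}, and since $h_1=1$ the estimates \eqref{eqn.C1} and \eqref{eqn.C4} rewrite immediately as \eqref{eqn.normH0_uk} and \eqref{eqn.normH2_uk}--\eqref{eqn.normLinfty_uk} (after possibly enlarging $C$).

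For the inductive step $k\geq 2$, $u_k$ solves the problem \eqref{eqn.202} of Lemma \ref{Thm.Lemma_vk} with source $f=\sum_{l=1}^{k-1}(l/k)\,u_l u_{k-l}$. I would estimate $\|\alpha f\|_{H^0(\Omega)}$ using the H\"older-type product bound $\|u_l u_{k-l}\|_{H^0(\Omega)} \leq \|u_l\|_{H^0(\Omega)} \|u_{k-l}\|_{L^\infty(\Omega)}$, choosing this specific splitting so that the $1/l$ factor supplied by the inductive hypothesis on $\|u_l\|_{H^0(\Omega)}$ cancels the $l/k$ weight in \eqref{eqn.112}. Substituting the induction hypotheses then gives
\begin{align*}
\|\alpha f\|_{H^0(\Omega)} \;\leq\; \frac{A^k}{C^2\, k\,\omega\, \|\alpha\|_{L^\infty(\Omega)}} \sum_{l=1}^{k-1} h_l\, h_{k-l} \;=\; \frac{h_k\, A^k}{C^2\, k\,\omega\, \|\alpha\|_{L^\infty(\Omega)}},
\end{align*}
where the last equality is the defining recursion \eqref{eqn.h_seq} for $h_k$. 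Inserting this into \eqref{eqn.D0}, \eqref{eqn.D3} and \eqref{eqn.D4} produces \eqref{eqn.normH0_uk} and \eqref{eqn.normH2_uk}--\eqref{eqn.normLinfty_uk} for the index $k$, closing the induction.

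For part (b), I again use induction. The base case $h_1 = 1 = 5^0/1^2$ is immediate, and assuming $h_j\leq 5^{j-1}/j^2$ for all $j<k$, the recursion \eqref{eqn.h_seq} yields $h_k \leq 5^{k-2}\sum_{l=1}^{k-1} l^{-2}(k-l)^{-2}$. Using the partial fraction identity $\frac{1}{l(k-l)}=\frac{1}{k}\bigl(\frac{1}{l}+\frac{1}{k-l}\bigr)$ and expanding the square,
\begin{align*}
k^2 \sum_{l=1}^{k-1} \frac{1}{l^2(k-l)^2} \;=\; 2\sum_{l=1}^{k-1}\frac{1}{l^2} + \frac{4 H_{k-1}}{k},
\end{align*}
where $H_{k-1}$ denotes the $(k-1)$-st harmonic number. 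The target inequality thus reduces to showing that this quantity is at most $5$ for every $k\geq 2$. For $k\geq 5$ I would use $\sum_{l\geq 1}l^{-2}=\pi^2/6$ together with the monotonicity bound $H_{k-1}/k \leq H_4/5 = 5/12$, yielding $\pi^2/3 + 5/3 < 5$. The three remaining cases $k=2,3,4$ give $4$, $9/2$, and $41/9$ respectively, all strictly below $5$.

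The main obstacle I anticipate is the precise bookkeeping in the inductive step of (a): the H\"older splitting $H^0\times L^\infty$ is essentially forced, since the alternative splitting $L^\infty\times H^0$ leaves a stray $l/(k-l)$ weight that does not simplify to a clean multiple of $h_k$; one must then verify that applying the estimates from Lemma \ref{Thm.Lemma_vk} absorbs exactly the right powers of $k\omega$ to match \eqref{eqn.normH0_uk}--\eqref{eqn.normLinfty_uk}. In part (b), the constant $5$ is nearly sharp (the supremum of the normalized harmonic sum above is approximately $4.56$, attained near $k=4$), so loose uniform bounds miss the target and explicit verification for small $k$ is unavoidable.
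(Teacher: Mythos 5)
Your proof of part (a) is correct and is essentially identical to the paper's: the same strong induction, the same source $f_k=\sum_{l}(l/k)u_lu_{k-l}$ fed into Lemma \ref{Thm.Lemma_vk}, and the same $H^0\times L^\infty$ H\"older splitting chosen precisely so that the factor $l$ cancels against the $1/(Cl\omega\|\alpha\|_{L^\infty(\Omega)})$ from the inductive bound on $\|u_l\|_{H^0(\Omega)}$, producing the convolution $\sum_l h_l h_{k-l}=h_k$.

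For part (b) you take a genuinely different and, in my view, cleaner route. The paper proves the more general statement $h_k\leq\theta r^k/k^2$ by induction, bounds the convolution sum $\sum_{l}l^{-2}(k+1-l)^{-2}$ by comparison with an integral, obtains a decreasing quantity $\varrho_k\to 4$, and then must verify the inequality directly for $k=1,\dots,11$ before the choice $\theta=1/5$, $r=5$ closes the induction. You instead evaluate the normalized sum exactly via the partial-fraction identity, reducing the inductive step to $2\sum_{l=1}^{k-1}l^{-2}+4H_{k-1}/k\leq 5$; the tail $k\geq 5$ follows from $\pi^2/3+5/3<5$ (using that $H_{k-1}/k$ is decreasing for $k\geq 3$), and only $k=2,3,4$ need explicit checking — your values $4$, $9/2$, $41/9$ are all correct. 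This buys a shorter verification list and an exact identity in place of an integral estimate; the paper's parametrized $(\theta,r)$ formulation buys a bit more flexibility (it exhibits admissible pairs like $(1/7,7)$ and $(1/6,6)$ with even fewer base cases), but for the specific claim \eqref{eqn.h_seq03} your argument is complete and tighter. Your closing observation that the supremum of the normalized sum is $41/9\approx 4.56$ at $k=4$, so that the constant $5$ cannot be lowered much by this method, is also accurate.
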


\begin{proof}
We will rely on Lemmas \ref{Thm.Lemma_v1} and \ref{Thm.Lemma_vk}. In this proof, the constant $C=C(\Omega,\beta,\gamma,\lambda,\eta)$ may change from inequality to inequality, but it is carefully kept independent of the frequency $\omega \geq \omega_{o}$ and of $k \geq 1$ as it was done in  Lemmas \ref{Thm.Lemma_v1} and \ref{Thm.Lemma_vk}.

For part (a), the claim is clearly true for $k=1$ as shown by Lemma \ref{Thm.Lemma_v1}. For $k \geq 2$, we proceed by induction. Suppose that \eqref{eqn.normH0_uk}-\eqref{eqn.normLinfty_uk} are true for $l=1, \ldots ,k$. We rely on Lemma \ref{Thm.Lemma_vk} which was formulated for a generic source $f$. So in order to match \eqref{eqn.112}, now we apply it for a specific source of the form
\begin{align}
f_{k+1} =  \sum_{l=1}^{k} \frac{l}{k+1} \,  u_{l} \, u_{k+1-l} \label{eqn.source_k}
\end{align}
and we obtain
\begin{align*}
\| u_{k+1} \|_{H^{0}(\Omega)} &\leq C \| \alpha \|_{L^{\infty}(\Omega)} \| f_{k+1} \|_{H^{0}(\Omega)} 
 \leq \frac{C \| \alpha \|_{L^{\infty}(\Omega)} }{k+1} \sum_{l=1}^{k} l \| u_{l} u_{k+1-l} \|_{H^{0}(\Omega)} \\ &\leq \frac{C  \| \alpha \|_{L^{\infty}(\Omega)} }{k+1} \sum_{l=1}^{k} l \| u_{l} \|_{H^{0}(\Omega)}  \| u_{k+1-l} \|_{L^{\infty}(\Omega)}   \\
 &\leq \frac{C  \| \alpha \|_{L^{\infty}(\Omega)} }{k+1} \sum_{l=1}^{k}  \frac{l \, h_l}{C l \omega  \| \alpha \|_{L^{\infty}(\Omega)}}   \frac{h_{k+1-l}}{C  \| \alpha \|_{L^{\infty}(\Omega)}}
 \left( \frac{C^2}{\omega^{1/2}} \| g \|_{H^{s}(\partial \Omega)} \| \alpha \|_{L^{\infty}(\Omega)} \right)^{k+1} 
  \\ 
 & = \frac{h_{k+1}}{C (k+1) \omega \| \alpha \|_{L^{\infty}(\Omega)} } \left( \frac{C^2 }{\omega^{1/2}} \| g \|_{H^{s}(\partial \Omega)} \| \alpha \|_{L^{\infty}(\Omega)} \right)^{k+1} 
\end{align*}
which renders \eqref{eqn.normH0_uk} as desired. Similarly to obtain \eqref{eqn.normH2_uk}, from Lemma \ref{Thm.Lemma_vk} we have that
\begin{align*}
\| u_{k+1} \|_{H^{2}(\Omega)} &\leq C (k+1) \omega \| \alpha \|_{L^{\infty}(\Omega)} \| f_{k+1} \|_{H^{0}(\Omega)} \\
&\leq C \omega \| \alpha \|_{L^{\infty}(\Omega)}  \sum_{l=1}^{k} l \| u_{l} u_{k+1-l} \|_{H^{0}(\Omega)} \\ 
 &\leq C \omega \| \alpha \|_{L^{\infty}(\Omega)} \sum_{l=1}^{k} l \| u_{l} \|_{H^{0}(\Omega)}  \| u_{k+1-l} \|_{L^{\infty}(\Omega)}  \nonumber \\
  &\leq C \omega \| \alpha \|_{L^{\infty}(\Omega)} \sum_{l=1}^{k} \frac{ l \, h_l}{C l \omega \| \alpha \|_{L^{\infty}(\Omega)}} \frac{h_{k+1-l}}{C \| \alpha \|_{L^{\infty}(\Omega)}}
 \left( \frac{C^2}{\omega^{1/2}} \| g \|_{H^{s}(\partial \Omega)} \| \alpha \|_{L^{\infty}(\Omega)} \right)^{k+1} 
 \nonumber \\ 
 &= \frac{h_{k+1}}{C \| \alpha \|_{L^{\infty}(\Omega)}} \left( \frac{C^2 }{\omega^{1/2}} \| g \|_{H^{s}(\partial \Omega)} \| \alpha \|_{L^{\infty}(\Omega)} \right)^{k+1} \nonumber
\end{align*}
which renders \eqref{eqn.normH2_uk}. Here we have used the definition \eqref{eqn.h_seq}. The estimate \eqref{eqn.normLinfty_uk} is obtained using exactly the same argument. 

For part (b), we proceed to prove first that there exists constants $\theta > 0$ and $r > 0$ such that 
\begin{align}
h_k \leq \frac{ \theta \, r^{k}}{k^2} \qquad \text{for all $k \geq 1$.} \label{eqn.h_seq03alt}
\end{align}
We do this by induction once more. Suppose that $h_l \leq \theta \, r^{l} / l^2$ for $l=1, 2 \ldots, k$. By the definition \eqref{eqn.h_seq}, we have that
\begin{align}
h_{k+1} = \sum_{l=1}^{k} h_{l} h_{k+1-l} \leq \theta^2 \sum_{l=1}^{k} \frac{r^{l}}{ l^2 } \frac{ r^{k+1-l} }{ (k+1-l)^2 } \leq \theta^2 r^{k+1} \sum_{l=1}^{k} \frac{1}{ l^2 (k+1-l)^2 }. \label{eqn.h_seq05}
\end{align}
Now, using an integral to bound the sum, we obtain
\begin{align}
\sum_{l=1}^{k} \frac{1}{ l^2 (k+1-l)^2 } \leq 2 \left( \frac{1}{k^2} + \int_{1}^{(k+1)/2} \frac{dy}{y^2 (k+1-y)^2} \right) = 2 \left( 
\frac{1}{k^2} + \frac{k^2 + 2 k \ln k - 1}{k (k+1)^3} \right) \label{eqn.h_seq07}
\end{align}
valid for $k \geq 2$ and where we have used the symmetry of the terms in the sum about $l_{\rm mid} = (k+1)/2$. Plugging \eqref{eqn.h_seq07} back into \eqref{eqn.h_seq05} and re-arranging some factors, we obtain
\begin{align}
h_{k+1} \leq  \frac{\theta \, r^{k+1}}{ (k+1)^2 } \, \theta  \,\varrho_{k}, \qquad \text{where} \quad \varrho_{k} = 2 \left( 
\frac{(k+1)^2}{k^2} + \frac{k^2 + 2 k \ln k - 1}{k (k+1)} \right). \label{eqn.h_seq09}
\end{align}
Note that $\varrho_{k}$ (independent of $\theta$ and of $r$) is a decreasing function of $k$, and $\varrho_{k} \to 4$ as $k \to \infty$. 
Hence, it only remains to choose $\theta$ small enough so that $\theta \varrho_{k_{*}} \leq 1$ and simultaneously $r$ large enough to ensure that \eqref{eqn.h_seq03alt} is satisfied for $k=1,2, \ldots, k_{*}$ for some fixed $k_{*}$. For instance, for $k_{*} = 2$, then $\varrho_{2} \approx 6.42 \leq 7$, and choosing $\theta = 1/7$ and $r = 7$ suffices to establish the inductive hypothesis. Similarly for $k_{*} = 3$, then $\varrho_{3} \approx 5.99 \leq 6$, and choosing $\theta = 1/6$ and $r = 6$ suffices to establish the inductive hypothesis. After checking some more values, we also find that for $k_{*} = 11$, we have $\varrho_{11} \leq 5$. In this case it suffices to choose $\theta=1/5$ and $r=5$ for \eqref{eqn.h_seq03} to be satisfied for $k=1,2, \ldots, 11$. The induction shown above concludes the proof.
\end{proof}

With the aid of the previous Lemmas we are ready to show that the ansatz \eqref{eqn.Ansatz} converges to a solution of the Westervelt equation.

\begin{theorem}[Existence] \label{thm.wellpose_forward}
Let $\Omega \subset \mathbb{R}^d$ for $d=2,3$ be a bounded connected domain with smooth boundary $\partial \Omega$. 
Assume that $\alpha \in L^{\infty}(\Omega)$, $\beta, \gamma \in W^{1,\infty}(\Omega)$ and $\lambda, \eta \in L^{\infty}(\partial \Omega)$ are real-valued functions satisfying \eqref{eqn.lemma_unique_lower_bounds}, and $\alpha$ is non vanishing. Let the sequence $\{ u_{k} \}$ be defined recursively as the weak solutions to the systems \eqref{eqn.111}-\eqref{eqn.112}.
If $g \in H^{s}(\partial \Omega)$ for $s \geq 1/2$ and $\omega$ sufficiently large such that
\begin{align}
r = \frac{5 C^2 \| \alpha \|_{L^{\infty}(\Omega)} \| g \|_{H^{s}(\partial \Omega)}}{\omega^{1/2}} < 1 \label{eqn.small_g}
\end{align}
where $C=C(\Omega,\beta, \gamma,\lambda,\eta)$ is the constant appearing on Lemma \ref{Thm.Lemma_u_decay}, then the ansatz \eqref{eqn.Ansatz} converges absolutely in $C^{2}([0,T];H^{2}(\Omega))$ as $K \to \infty$, and the limit $u$ solves the Westervelt system \eqref{eqn.001}-\eqref{eqn.004}, satisfying the following stability estimates
\begin{align}
\| \partial_{t}^m u \|_{H^{0}((0,T); H^{2}(\Omega))} &\leq C \omega^{m-1} \frac{ (2\pi)^{1/2} \| g \|_{H^{s}(\partial \Omega)} }{ (1-r) } , \qquad m=0,1,2, \label{eqn.stability} \\
\| \partial_{t}^m u \|_{C^{0}([0,T]; H^{2}(\Omega))} &\leq C \omega^{m-1/2} \frac{ \| g \|_{H^{s}(\partial \Omega)} }{ (1-r) } , \qquad m=0,1,2, \label{eqn.stability2}
\end{align}
where $T = 2 \pi / \omega$ and $C$ is independent of $\omega \geq \omega_{o}$ for some sufficiently large $\omega_o$.
\end{theorem}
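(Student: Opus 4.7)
The plan is to combine the two parts of Lemma \ref{Thm.Lemma_u_decay} into a clean single-mode estimate, use it to show absolute convergence of the partial sums $u^K$ together with their first two time derivatives in the appropriate Bochner spaces, and then pass to the limit in the Westervelt system. Inserting the bound $h_k \leq 5^{k-1}/k^2$ from part (b) into \eqref{eqn.normH2_uk}, writing $\rho := C^2 \|g\|_{H^s(\partial\Omega)} \|\alpha\|_{L^\infty(\Omega)} / \omega^{1/2} = r/5$, and pulling out one factor of $\rho$ yields the single-mode bound
\begin{align*}
\| u_k \|_{H^2(\Omega)} \;\lesssim\; \frac{C \| g \|_{H^s(\partial \Omega)}}{\omega^{1/2}} \cdot \frac{r^{k-1}}{k^2}, \qquad k \geq 1,
\end{align*}
with the analogous bound for $\| u_k \|_{L^\infty(\Omega)}$ coming from \eqref{eqn.normLinfty_uk}.

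Next I would verify both stability estimates term-by-term in the Fourier series. Since $\partial_t^m\!\left(u_k e^{-ik\omega t}\right)$ has spatial norm $(k\omega)^m \|u_k\|_{H^2(\Omega)}$, a triangle inequality gives
\begin{align*}
\| \partial_t^m u \|_{C^0([0,T]; H^2(\Omega))} \leq \sum_{k=1}^\infty (k\omega)^m \|u_k\|_{H^2(\Omega)} \lesssim \frac{C \|g\|_{H^s(\partial\Omega)}}{\omega^{1/2}} \, \omega^m \sum_{k=1}^\infty \frac{r^{k-1}}{k^{2-m}},
\end{align*}
and the last sum is dominated by $1/(1-r)$ for $m=0,1,2$, yielding \eqref{eqn.stability2}. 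For the Bochner $L^2$-in-time estimate \eqref{eqn.stability}, orthogonality of $\{e^{-ik\omega t}\}$ over $[0,T]$ with $T=2\pi/\omega$ furnishes the Parseval identity $\| \partial_t^m u^K \|_{H^0((0,T); H^2(\Omega))}^2 = T \sum_{k=1}^K (k\omega)^{2m} \|u_k\|_{H^2(\Omega)}^2$, which after substitution reduces to a convergent geometric series in $r^2$ and produces the $\omega^{m-1}$ prefactor once $T=2\pi/\omega$ is absorbed.

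Uniformity of these bounds in $K$ ensures that $\{u^K\}$ is Cauchy in $C^2([0,T]; H^2(\Omega))$ and converges to a limit $u$. Periodicity \eqref{eqn.003}-\eqref{eqn.004} is built into the ansatz by construction. To verify that the limit solves \eqref{eqn.001}-\eqref{eqn.002}, I would rely on the embedding $H^2(\Omega) \hookrightarrow L^\infty(\Omega)$ for $d \leq 3$ to guarantee that $\sum_k \|u_k\|_{L^\infty(\Omega)}$ converges, which legitimizes the Cauchy product formula \eqref{eqn.Cauchy} for $\alpha u^2$ in $L^\infty(\Omega)$ uniformly in $t$. Consequently the Fourier coefficients of $\partial_t^2(\alpha u^2)$ are exactly the ones appearing in \eqref{eqn.105}, so the recursion \eqref{eqn.111}-\eqref{eqn.112} ensures that every Fourier mode of $\mathcal{W}(u)$ vanishes, and the analogous mode-matching argument on $\partial\Omega$ recovers the boundary condition \eqref{eqn.002}.

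The main technical obstacle is the rigorous justification of the Cauchy product expansion of $\alpha u^2$ and the interchange of $\partial_t^2$ with the infinite sum in a norm strong enough to identify the series as an actual solution of the PDE, rather than a formal one; the linear portion of $\mathcal{W}$ is continuous on $C^2([0,T]; H^2(\Omega))$ and passes to the limit trivially, so the effort is concentrated on the quadratic term.
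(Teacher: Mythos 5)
Your proposal is correct and follows essentially the same route as the paper: combine the mode-wise bounds of Lemma \ref{Thm.Lemma_u_decay}(a) with the estimate $h_k \leq 5^{k-1}/k^2$ to sum a geometric series in $r$, deduce absolute convergence of the partial sums in $C^2([0,T];H^2(\Omega))$ together with the stability estimates, and then identify the limit as a solution by matching Fourier modes via orthogonality and the Cauchy product (justified by the absolute convergence of $\sum_k (k\omega)^2 \|u_k\|_{L^\infty(\Omega)}$, exactly as in the paper). Your term-by-term verification of \eqref{eqn.stability}--\eqref{eqn.stability2} merely fills in the step the paper dispatches with ``very similar arguments,'' so there is nothing substantively different to compare.
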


\begin{proof}
We show the convergence of the series  $\{ u^{K} \}$ given by \eqref{eqn.Ansatz} in the norm of $H^{0}((0,T); H^{2}(\Omega))$.
Most of the needed steps are already proven in Lemmas \ref{Thm.Lemma_v1}-\ref{Thm.Lemma_u_decay} which straightforwardly imply that
\begin{align*}
\sum_{k=0}^{K} \| u_{k} e^{-i k \omega t} \|_{H^0((0,T); H^{2}(\Omega))} &\leq T^{1/2} \sum_{k=1}^{K} \| u_{k} \|_{H^2(\Omega)} \\
&\leq \frac{T^{1/2}}{C \| \alpha \|_{L^{\infty}(\Omega)}}  \sum_{k=1}^{K} h_k  \left( \frac{C^2}{\omega^{1/2}} \| g \|_{H^{s}(\partial \Omega)} \| \alpha \|_{L^{\infty}(\Omega)} \right)^k \\
&\leq \frac{T^{1/2}}{5 C \| \alpha \|_{L^{\infty}(\Omega)}}  \sum_{k=1}^{K}  \frac{1}{k^2} \left( \frac{ 5 C^2}{\omega^{1/2}} \| g \|_{H^{s}(\partial \Omega)} \| \alpha \|_{L^{\infty}(\Omega)} \right)^k \\
&\leq \frac{T^{1/2}}{5 C \| \alpha \|_{L^{\infty}(\Omega)}}  \sum_{k=1}^{\infty} r^k = \frac{T^{1/2}}{5 C \| \alpha \|_{L^{\infty}(\Omega)}} \frac{r}{1-r} 
\end{align*}
for all $K \geq 1$, provided that $r < 1$ as assumed in \eqref{eqn.small_g}. Therefore we have absolute convergence of the partial sums $\{ u^{K} \}$ in the Hilbert space $H^{0}((0,T); H^{2}(\Omega))$. Due to completeness, the sequence $\{ u^{K} \}$ converges to an element $u \in H^{0}((0,T); H^{2}(\Omega))$. 
Very similar arguments show convergence in 
$H^{2}((0,T); H^{2}(\Omega))$ and
$C^{2}([0,T]; H^{2}(\Omega))$ as well.
Therefore the stability estimates \eqref{eqn.stability}-\eqref{eqn.stability2} are easily verified. 

It only remains to show that $u$ solves the Westervelt system \eqref{eqn.001}-\eqref{eqn.004}. First note that $u^K$ is periodic for every $K \geq 1$ and convergence in $C^{2}([0,T]; H^{2}(\Omega))$ implies that the limit $u$ satisfies the periodicity conditions \eqref{eqn.003}-\eqref{eqn.004} in the norm of $H^2(\Omega)$. Now, take $v \in L^2(\Omega)$ and $k \in \mathbb{N}$ arbitrary. Multiply \eqref{eqn.001} by $v e^{- i k \omega t}$, integrate over $\Omega \times (0,T)$, and use the orthogonality of the Fourier basis for the time interval $(0,T)$ to obtain
\begin{align}
( \mathcal{W}(u) , v e^{- i k \omega t} )_{\Omega \times (0,T)} &= - (k \omega)^2 ( u_k - 2 \alpha \sum_{l=1}^{k-1} \frac{l}{k} u_l u_{k-l} , v )_{\Omega} - ( \nabla \cdot (\gamma - i k \omega \beta) \nabla u_k , v )_{\Omega} = 0
\end{align}
due to \eqref{eqn.111}-\eqref{eqn.112}, where we have used the Cauchy product formula \eqref{eqn.Cauchy} and term-by-term differentiation in time $t$ valid for absolutely convergent series. If $k$ is a non-positive integer, then we also get zero due to the orthogonality of the Fourier basis. Since the set $ \{ v e^{- i k \omega t} : v \in L^2(\Omega) \, , \, k \in \mathbb{Z} \} $   is dense in $L^2(\Omega \times (0,T))$, then $u \in C^2([0,T];H^2(\Omega))$ is a strong solution to the Westevelt equation where the equality in \eqref{eqn.001} is satisfied in the $L^2(\Omega \times (0,T))$-sense. Similarly, this solution $u$ possesses enough regularity to have Dirichlet trace in $C^2([0,T]; H^{3/2}(\partial \Omega))$ and Neumann trace in $C^2([0,T]; H^{1/2}(\partial \Omega))$. Therefore, due to the orthogonality of the Fourier basis, the boundary condition \eqref{eqn.002} is also satisfied strongly. 
\end{proof}

\subsection{Uniqueness of the solution}

The uniqueness of solutions is a consequence of the results shown in this subsection. Following \cite{Kaltenbacher2011}, we first obtain a continuity inequality for the linearized problem, which is then used to verify a contraction property from where the uniqueness for the nonlinear problem follows. We propose to work with a linear version of the Westervelt equation that purposely contains time-independent coefficients and where all of the nonlinearity is replaced by a time-periodic source. For the remainder of this Section, $v_t$ stands for $\partial_t v$ and $v_{tt}$ stands for $\partial_t^2 v$ which eases the notation.

\begin{lemma} \label{lemma.uniqueness}
Let us assume that $f \in H^{0}((0,T); H^0(\Omega))$, and that $\beta, \gamma \in W^{1,\infty}(\Omega)$ and $\lambda, \eta \in W^{0,\infty}(\partial\Omega)$ are real-valued functions satisfying \eqref{eqn.lemma_unique_lower_bounds}.
Then, any time-periodic solution 
\begin{align}
v \in H^2((0,T);H^2(\Omega))
\label{eqn.lemma_unique_regularity}
\end{align}
to the linear system
\begin{equation}\label{eq:linear_system}
\begin{aligned}
    v_{tt} - \nabla \cdot( \gamma \nabla v) - \nabla \cdot ( \beta \nabla v_t)=f\qquad &\text{in $(0,T) \times \Omega$}\\
    ( \gamma + \beta \partial_{t} ) \partial_{\nu} v + \lambda v_t + \eta v= 0 \qquad &\text{on $(0,T) \times \partial \Omega$} \\
    v|_{t=0} = v|_{t=T} \qquad &\text{in $\Omega$} \\
    v_t|_{t=0} = v_t|_{t=T} \qquad &\text{in $\Omega$}
\end{aligned}
\end{equation}
satisfies the estimate 
\begin{align}
\|v\|_{H^{0}((0,T);H^1(\Omega))} + \|v_t\|_{H^{0}((0,T);H^{1}(\Omega))} + \|v_{tt}\|_{H^0((0,T);H^0(\Omega))} \leq C\|f\|_{H^0((0,T);H^0(\Omega))}.
\end{align}
\end{lemma}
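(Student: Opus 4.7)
The plan is a direct energy method: I would test the equation \eqref{eq:linear_system} successively against $\clo{v_t}$, $\clo{v}$, and $\clo{v_{tt}}$, integrate over $\Omega \times (0,T)$, use the Robin-type boundary condition to rewrite the surface contributions, and exploit the $T$-periodicity to annihilate every term that is a pure time-derivative of a quadratic in $v$ or its derivatives. The positivity hypotheses \eqref{eqn.lemma_unique_lower_bounds} supply coercivity on the dissipative terms governed by $\beta$ and $\lambda$, and the assumed regularity \eqref{eqn.lemma_unique_regularity} legitimizes all integrations by parts. I will repeatedly invoke the Poincar\'e--trace inequality $\|u\|_{H^1(\Omega)}^2 \lesssim \|\nabla u\|_{L^2(\Omega)}^2 + \|u\|_{L^2(\partial \Omega)}^2$, which is standard on bounded Lipschitz domains.

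Testing against $\clo{v_t}$: the real parts $\Re(v_{tt} \clo{v_t})$, $\Re(\gamma \nabla v \cdot \nabla \clo{v_t})$, and $\Re(\eta v \clo{v_t})$ are each $\tfrac{1}{2}\partial_t$ of a nonnegative quantity and integrate to zero by periodicity, leaving
\begin{align*}
\int_0^T\!\!\int_\Omega \beta \,|\nabla v_t|^2 \, dx \, dt + \int_0^T\!\!\int_{\partial \Omega} \lambda\, |v_t|^2 \, dS \, dt = \Re \int_0^T\!\!\int_\Omega f \clo{v_t} \, dx\, dt.
\end{align*}
Combined with Cauchy--Schwarz, the lower bounds $\beta_0, \lambda_0 > 0$, and the Poincar\'e--trace inequality, this yields $\|v_t\|_{H^0((0,T); H^1(\Omega))} \lesssim \|f\|_{H^0((0,T); H^0(\Omega))}$. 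For the second estimate, testing against $\clo{v}$ and integrating $\int v_{tt}\clo{v}$ by parts in $t$ (periodicity killing the endpoint contribution) gives, after the same cancellations, $-\|v_t\|^2 + \int\!\int \gamma |\nabla v|^2 + \int\!\int_{\partial \Omega} \eta |v|^2 = \Re\int\!\int f \clo{v}$, from which $\|v\|_{H^0((0,T); H^1(\Omega))} \lesssim \|v_t\|_{H^0((0,T);H^0(\Omega))} + \|f\| \lesssim \|f\|$ follows by Young's inequality.

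The main obstacle is the $v_{tt}$ estimate, obtained by testing against $\clo{v_{tt}}$. A naive accounting would seemingly demand elliptic $H^2$-regularity in $x$, but a double integration by parts circumvents this. On the volume term $\int\!\int \gamma \nabla v \cdot \nabla \clo{v_{tt}}$, after spatial IBP I perform a second IBP in $t$, using periodicity, to rewrite it as $-\int\!\int \gamma |\nabla v_t|^2$, which Step 1 already controls. The $\beta \nabla v_t \cdot \nabla \clo{v_{tt}}$ term vanishes after taking the real part by periodicity, and the boundary contribution $\int\!\int_{\partial \Omega} \eta v \clo{v_{tt}}$ is transformed by a temporal IBP into $-\int\!\int_{\partial \Omega} \eta |v_t|^2$, again controlled. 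The resulting identity $\|v_{tt}\|^2 = \int\!\int \gamma |\nabla v_t|^2 + \int\!\int_{\partial \Omega} \eta |v_t|^2 + \Re \int\!\int f \clo{v_{tt}}$, with Young's inequality used to absorb a fraction of $\|v_{tt}\|^2$ back into the left-hand side, closes the estimate. Summing the three bounds yields the claim; the crux is arranging the pairing of spatial and temporal IBPs in Step 3 so that no norms stronger than those stated ever enter the argument.
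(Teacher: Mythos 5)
Your proposal is correct and follows essentially the same route as the paper: the three successive testings against $\clo{v}_t$, $\clo{v}$, and $\clo{v}_{tt}$, the use of periodicity to annihilate total time derivatives, coercivity from $\beta_0,\lambda_0,\gamma_0,\eta_0>0$ with a Poincar\'e--Friedrichs/trace inequality, and in particular the temporal integration by parts in the $\clo{v}_{tt}$ step that converts $\gamma\nabla v\cdot\nabla\clo{v}_{tt}$ and $\eta v\,\clo{v}_{tt}$ into $-\gamma|\nabla v_t|^2$ and $-\eta|v_t|^2$, are exactly the paper's argument. No substantive differences.
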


\begin{proof}
We multiply the equation by $\overline{v}_t$, integrate over $\Omega$, and take the real part to obtain
\begin{align} \label{eq:lemma4_1}
\frac{1}{2} \frac{d}{dt} \left( \|v_t\|^2_{H^0(\Omega)} + \| \sqrt{\gamma} \nabla v \|^2_{H^0(\Omega)} + \| \sqrt{\eta} v \|^2_{H^0(\partial\Omega)} \right) + \| \sqrt{\beta} \nabla v_t\|^2_{H^0(\Omega)} + \|\sqrt{\lambda}v_t\|^2_{H^0(\partial\Omega)}=\Re (f,\overline{v}_t)_{H^0(\Omega)}.
\end{align}
Integrating with respect to time and using the periodicity of $v$ gives
\begin{align} \label{eq:lemma4_2}
\| \sqrt{\beta} \nabla v_t\|^2_{H^0((0,T);H^0(\Omega))} + \|\sqrt{\lambda}v_t\|^2_{H^0((0,T);H^0(\partial\Omega))}= \int^T_0 \Re (f,\overline{v}_t)_{H^0(\Omega)}dt ,
\end{align}
where the right-hand side is bounded by
\begin{align*}
\delta^{-1}\|f\|^2_{H^0((0,T);H^0(\Omega)))} + \delta\|v_t\|^2_{H^0((0,T);H^0(\Omega))},
\end{align*}
for any $\delta>0$. Applying the Poincar\'e-Friedrichs inequality (see \cite[\S 6.3]{Atkinson-Han-Book-2001}) on the left-hand side of \eqref{eq:lemma4_2} and taking $\delta>0$ sufficiently small allows us to absorb the second term with the left-hand side of the inequality and deduce the estimate
\begin{align}\label{eq:energy_ineq1}
\|v_t\|_{H^0((0,T);H^1(\Omega))}\leq C\|f\|_{H^0((0,T);H^0(\Omega))},
\end{align}
for a constant $C$ depending on $\Omega$,  $\beta_0$ and $\lambda_0$.

We multiply now the wave equation in \eqref{eq:linear_system} by $\overline{v}$, integrate over $(0,T)\times\Omega$, apply integration by parts in time and in space, and take the real part to obtain
\begin{align*}
-\|v_t\|^2_{H^0((0,T);H^0(\Omega))} + \| \sqrt{\gamma} \nabla v\|^2_{H^0((0,T);H^0(\Omega))} - \int^T_0 \Re ( (\gamma + \beta \partial_t) \partial_\nu v,\overline{v})_{H^0(\partial\Omega)}dt = \int^T_0 \Re (f,\overline{v})_{H^0(\Omega)}dt.
\end{align*}
From the boundary condition in \eqref{eq:linear_system} and time-periodicity, we get that
\begin{align*}
-\int^T_0\Re( ( \gamma + \beta \partial_t)\partial_\nu v,\overline{v})_{H^0(\partial\Omega)}dt = \int^T_0 \frac{1}{2} \frac{d}{dt} \|\sqrt{\lambda}v\|^2_{H^0(\partial\Omega)} + \|\sqrt{\eta}v\|^2_{H^0(\partial\Omega))} dt = \|\sqrt{\eta}v\|^2_{H^0((0,T);H^0(\partial\Omega))}.
\end{align*}
Therefore, the previous equality rewrites as
\begin{align*}
\| \sqrt{\gamma} \nabla v \|^2_{H^0((0,T);H^0(\Omega))} + \|\sqrt{\eta}v\|^2_{H^0((0,T);H^0(\partial\Omega))} =\int^T_0\Re(f,\overline{v})_{H^0(\Omega)}dt+\|v_t\|^2_{H^0((0,T);H^0(\Omega))}.
\end{align*}
Using inequality \eqref{eq:energy_ineq1} we see that
\begin{align*}
\gamma_0 \|\nabla v\|^2_{H^0((0,T);H^0(\Omega))} 
 + \eta_0 \|v\|^2_{H^0((0,T);H^0(\partial\Omega))} \leq 
 \left( \delta^{-1} + C^2 \right) \|f\|_{H^0((0,T);H^0(\Omega))}^2 + \delta \|v\|_{H^0((0,T);H^0(\Omega))}^2,
\end{align*}
with $C>0$ the constant from inequality \eqref{eq:energy_ineq1}. An application of Poincar\'e-Friedrichs inequality (see \cite[\S 6.3]{Atkinson-Han-Book-2001}) and choosing $\delta>0$ small enough gives the estimate
\begin{align*}
\|v\|_{H^0((0,T);H^1(\Omega))}\leq C\|f\|_{H^0((0,T);H^0(\Omega))},
\end{align*}
for a different constant $C>0$, depending on $\Omega, \beta_0, \gamma_0, \lambda_0, \eta_0$.

For the last part of the energy estimate, we multiply the linear equation by $\overline{v}_{tt}$, integrate over $\Omega$, apply integration by parts, and take real part to get
\begin{align*}
\|v_{tt}\|^2_{H^0(\Omega)} + \Re( \gamma \nabla v,\nabla \overline{v}_{tt})_{H^0(\Omega)} + \Re( \beta \nabla v_t, \nabla \overline{v}_{tt})_{H^0(\Omega)} + \Re( \lambda v_t + \eta v,\overline{v}_{tt})_{H^0(\partial\Omega)} = \Re(f,\overline{v}_{tt})_{H^0(\Omega)}.
\end{align*}
Using the identities
\begin{align*}
& \Re( \gamma \nabla v,\nabla\overline{v}_{tt})_{H^0(\Omega)} =  \frac{d}{dt} \| \sqrt{\gamma} \nabla v \|_{H^0(\Omega)} - \| \sqrt{\gamma} \nabla v_t\|^2_{H^0(\Omega)},\\
&\Re(\beta \nabla v_t , \nabla \overline{v}_{tt})_{H^0(\Omega)} = \frac{1}{2} \frac{d}{dt} \|\sqrt{\beta}\nabla v_t\|^2_{H^0(\Omega)},\\
&\Re(\lambda v_t,\overline{v}_{tt})_{H^0(\partial\Omega)} = \frac{1}{2} \frac{d}{dt} \|\sqrt{\lambda}v_t\|^2_{H^0(\partial\Omega)},
\end{align*}
and the periodicity of $v$, after integration in time over the interval $(0,T)$ one obtains
\begin{align*}
\|v_{tt}\|^2_{H^0((0,T);H^0(\Omega))}= \| \sqrt{\gamma} \nabla v_t\|^2_{H^0((0,T);H^0(\Omega))} + \int^T_0\left(\Re(f,\overline{v}_{tt})_{H^0(\Omega)}-(\eta v,\overline{v}_{tt})_{H^0(\partial\Omega)}\right)dt.
\end{align*}
We then use inequality \eqref{eq:energy_ineq1}, integration by parts, periodicity and the estimate
\begin{align*}
\int^T_0\Re(f,\overline{v}_{tt})_{H^0(\Omega)}dt\leq \delta^{-1}\|f\|^2_{H^0((0,T);H^0(\Omega)))} + \delta\|v_{tt}\|^2_{H^0((0,T);H^0(\partial\Omega))},
\end{align*}
which holds for any $\delta>0$, to get
\begin{align*}
\|v_{tt}\|^2_{H^0((0,T);H^0(\Omega)))} & \leq (C+\delta^{-1})\|f\|^2_{H^0((0,T);H^0(\Omega))} + \delta\|v_{tt}\|^2_{H^0((0,T);H^0(\Omega))} + \|\sqrt{\eta}v_{t}\|^2_{H^0((0,T);H^0(\partial\Omega))}\\
&\leq (C+\delta^{-1})\|f\|^2_{H^0((0,T);H^0(\Omega))} 
 + \delta\|v_{tt}\|^2_{H^0((0,T);H^0(\Omega))} + C'\|v_{t}\|^2_{H^0((0,T);H^1(\Omega))}\\
&\leq C''\|f\|^2_{H^0((0,T);H^0(\Omega))} 
 + \delta\|v_{tt}\|^2_{H^0((0,T);H^0(\Omega))}
\end{align*}
where we used a trace inequality and \eqref{eq:energy_ineq1}. We conclude the proof by taking $\delta>0$ small enough and absorbing the last term on the right.
\end{proof}

\begin{theorem}[Uniqueness] \label{thm.uniqueness}
Let $\Omega \subset \mathbb{R}^d$ for $d=2,3$ be a bounded connected domain with smooth boundary $\partial \Omega$. Assume that $\alpha \in L^{\infty}(\Omega)$,  $\beta, \gamma \in W^{1,\infty}(\Omega)$, and $\lambda, \eta \in W^{0,\infty}(\partial\Omega)$ are real-valued functions satisfying \eqref{eqn.lemma_unique_lower_bounds}. 
For any fixed $M>0$ there is $\epsilon>0$ for which, if $\| \alpha \|_{L^\infty(\Omega)}<\epsilon$, there exists at most one solution to \eqref{eqn.001}-\eqref{eqn.004} such that
\begin{align} \label{eqn.main_uniqueness}
\|u\|_{C^2([0,T];H^2(\Omega))}\leq M.
\end{align}
\end{theorem}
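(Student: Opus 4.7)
The plan is the standard subtract-and-contract argument, with Lemma \ref{lemma.uniqueness} supplying the linear energy estimate. Let $u_1, u_2$ be two solutions to \eqref{eqn.001}--\eqref{eqn.004} both satisfying the bound \eqref{eqn.main_uniqueness}, and set $w = u_1 - u_2$. Since $\partial_t^2(u_i - \alpha u_i^2) = \partial_{tt} u_i - \alpha \partial_{tt}(u_i^2)$ (the coefficient $\alpha$ is time-independent), subtracting the two Westervelt equations and using the identity $u_1^2 - u_2^2 = (u_1 + u_2)w$ yields
\begin{align*}
w_{tt} - \nabla\cdot(\gamma \nabla w) - \nabla\cdot(\beta \nabla w_t) \;=\; \alpha\, \partial_{tt}\bigl( (u_1+u_2)\, w \bigr) \qquad\text{in $(0,T)\times\Omega$},
\end{align*}
together with the homogeneous boundary condition $(\gamma + \beta\partial_t)\partial_\nu w + \lambda w_t + \eta w = 0$ (the two copies of $g\,e^{-i\omega t}$ cancel) and homogeneous periodicity conditions. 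Thus $w$ satisfies precisely the linear problem \eqref{eq:linear_system} with forcing $f := \alpha\,\partial_{tt}((u_1+u_2)w)$.

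Next I would check the regularity hypothesis \eqref{eqn.lemma_unique_regularity}. Since $u_1, u_2 \in C^2([0,T];H^2(\Omega))$ by \eqref{eqn.main_uniqueness}, so is $w$, and hence $w \in H^2((0,T);H^2(\Omega))$. Moreover, using the continuous embedding $H^2(\Omega)\hookrightarrow L^\infty(\Omega)$ valid in $d=2,3$, the product $(u_1+u_2)w$ and its time derivatives up to order two lie in $L^2((0,T);L^2(\Omega))$, so $f$ is well-defined in $H^0((0,T);H^0(\Omega))$. Lemma \ref{lemma.uniqueness} therefore applies and gives
\begin{align*}
\|w\|_{H^0((0,T);H^1(\Omega))} + \|w_t\|_{H^0((0,T);H^1(\Omega))} + \|w_{tt}\|_{H^0((0,T);H^0(\Omega))} \;\leq\; C\,\|f\|_{H^0((0,T);H^0(\Omega))}.
\end{align*}

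The next step is to estimate $f$. Expanding the time derivatives and using $\|u_1+u_2\|_{C^2([0,T];H^2(\Omega))} \leq 2M$ together with $H^2 \hookrightarrow L^\infty$,
\begin{align*}
\|f\|_{H^0((0,T);H^0(\Omega))} \;\leq\; C\,\|\alpha\|_{L^\infty(\Omega)}\,M\,\Bigl( \|w\|_{H^0((0,T);H^0(\Omega))} + \|w_t\|_{H^0((0,T);H^0(\Omega))} + \|w_{tt}\|_{H^0((0,T);H^0(\Omega))} \Bigr).
\end{align*}
Combining the two displays,
\begin{align*}
\|w\|_{H^0((0,T);H^1(\Omega))} + \|w_t\|_{H^0((0,T);H^1(\Omega))} + \|w_{tt}\|_{H^0((0,T);H^0(\Omega))} \;\leq\; C'\,\|\alpha\|_{L^\infty(\Omega)}\,M\,\bigl(\text{same left-hand side}\bigr).
\end{align*}
Choosing $\epsilon$ so that $C' \epsilon\,M < 1$, the right-hand side is absorbed into the left, forcing $w\equiv 0$ and establishing uniqueness.

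The main obstacle I anticipate is keeping the product-rule bookkeeping clean enough to ensure that every term in $\partial_{tt}((u_1+u_2)w)$ is controlled by exactly the three norms appearing on the left-hand side of Lemma \ref{lemma.uniqueness}; this is why I would consistently pair $w_{tt}$ with the $L^\infty$ norm of $u_1+u_2$, $w_t$ with the $L^\infty$ norm of $\partial_t(u_1+u_2)$, and $w$ with the $L^\infty$ norm of $\partial_{tt}(u_1+u_2)$, each of these being finite thanks to \eqref{eqn.main_uniqueness} and the Sobolev embedding. Once that pairing is set up, the smallness of $\|\alpha\|_{L^\infty(\Omega)}$ does the rest.
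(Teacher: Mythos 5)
Your proposal is correct and follows essentially the same route as the paper: subtract the two solutions, rewrite the difference as a solution of the linear periodic system \eqref{eq:linear_system} with source $f=\alpha\,\partial_{tt}\bigl((u_1+u_2)w\bigr)$, apply Lemma \ref{lemma.uniqueness}, bound $f$ using \eqref{eqn.main_uniqueness} and the embedding $H^2(\Omega)\hookrightarrow L^\infty(\Omega)$, and absorb via smallness of $\|\alpha\|_{L^\infty(\Omega)}$. Your explicit verification of the regularity hypothesis \eqref{eqn.lemma_unique_regularity} and of $f\in H^0((0,T);H^0(\Omega))$ is a small addition the paper leaves implicit, but the argument is the same.
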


\begin{proof}
Let $u_1$ and $u_2$ be solutions to the nonlinear system \eqref{eqn.001}-\eqref{eqn.004} satisfying \eqref{eqn.main_uniqueness}. Let's set $v=u_1-u_2$. The function $v$ satisfies the equation
\begin{align*}
v_{tt}  - \nabla\cdot( \gamma \nabla v) - \nabla \cdot (\beta \nabla v_t) &= \alpha ( u_1^2 - u_2^2)_{tt} = \alpha \left( v (u_1 + u_2)  \right)_{tt} \\
&= \alpha \left(  v \, (u_1 + u_2)_{tt} + 2 v_t \, (u_1 + u_2)_{t} + v_{tt} \, (u_1 + u_2) \right).
\end{align*}
Thus, it can be regarded as a solution to the linear system \eqref{eq:linear_system} with source 
\begin{align*}
f = \alpha \left(  v \, (u_1 + u_2)_{tt} + 2 v_t \, (u_1 + u_2)_{t} + v_{tt} \, (u_1 + u_2) \right).
\end{align*}
From Lemma \ref{lemma.uniqueness}, it follows that
\begin{align*}
\|v\|_{H^0((0,T);H^1(\Omega))}+\|v_t\|_{H^0((0,T);H^1(\Omega))}+\|v_{tt}\|_{H^0((0,T);H^0(\Omega))}\leq C\|f\|_{H^0((0,T);H^0(\Omega))},
\end{align*}
where, after the use of the assumption \eqref{eqn.main_uniqueness} and the fact that the space $H^2(\Omega)$ is continuously embedded in $L^{\infty}(\Omega)$ in dimension $d=2,3$, we obtain
\begin{align*}
\|f\|_{H^0((0,T);H^0(\Omega))}\leq C M \|\alpha\|_{L^\infty(\Omega)} \left( \|v\|_{H^0((0,T);H^0(\Omega))} + \|v_t \|_{H^0((0,T);H^0(\Omega))} +  \|v_{tt}\|_{H^0((0,T);H^0(\Omega))} \right)
\end{align*}
for a constant $C>0$ independent of $M$. It is clear that for any choice of $M$, there is $\epsilon>0$ such that, for $\| \alpha \|_{L^\infty(\Omega)}<\epsilon$ then $v=0$.
\end{proof}

\section{Inverse problem using complex time-periodic solutions}
\label{Section.InverseProblem}

Due to the well-posedness of the forward problem \eqref{eqn.001}-\eqref{eqn.004} established in the previous sections, it is possible to define a Robin-to-Dirichlet operator that maps the boundary data $g \in H^{s}(\partial \Omega)$ for $s \geq 1/2$ to the Dirichlet trace of the solution $u$. This trace would belong to $C^2([0,T]; H^{3/2}(\partial \Omega))$. Note that this mapping depends nonlinearly on the coefficients of the Westervelt equation, namely, the (squared) wave speed $\gamma$, the diffusivity $\beta$, nonlinear coefficient $\alpha$, and the frequency $\omega$. Since the solution is time periodic, then $u$ can be uniquely decomposed into its Fourier components
\begin{align} \label{eqn.FourierAgain}
u(t,x) = \sum_{k=0}^{\infty} u_{k}(x) e^{- i k \omega t}
\end{align}
where $u_{k}$ is known as the $k$-th harmonic. In this section, we are interested only in the first-harmonic $u_{1}$ and second-harmonic $u_{2}$. The Dirichlet traces of these harmonics define the Robin-to-Dirichlet operator of interest, denoted by 
$\Lambda_{\alpha,\beta,\gamma}$, that maps
\begin{align} \label{eqn.MainNtD}
\Lambda_{\alpha,\beta,\gamma} : g \mapsto (u_{1}|_{\partial \Omega}, u_{2}|_{\partial \Omega}).
\end{align}
The objective of this section is to show that this map $\Lambda_{\alpha,\beta,\gamma}$ contains enough information to determine uniquely $\alpha$, $\beta$ and $\gamma$ under appropriate assumptions detailed below.

\subsection{First-harmonic measurements}
We begin by addressing the problem of determining the diffusivity $\beta$ and the (squared) wave speed $\gamma$ from measurements of first harmonic $u_1$ at the boundary $\partial \Omega$. Denote by $\Lambda^1_{\beta,\gamma}$ the Robin-to-Dirichlet (RtD) operator
\begin{align} \label{eqn.Lambda_mu}
\Lambda^1_{\beta,\gamma}:g\mapsto u_1|_{\partial\Omega}
\end{align}
with $u_1$ the unique solution to \eqref{eqn.111} assuming that the boundary coefficients $\lambda$ and $\eta$ are known and fixed. Let $\mu = \gamma-i\omega\beta$ and
denote by $\mu^{1/2}$ the principal square root of $\mu$, we apply the well-known Liouville transformation $U = \mu^{1/2} u_1$ and verify that $U$ solves the following elliptic equation
\begin{equation} \label{eqn.BVP_U}
\begin{aligned} 
\Delta U + \omega^2 p \, U &= 0, \quad \text{in $\Omega$} \\
(\gamma - i \omega \beta) \partial_{\nu} U - (i \omega \lambda - \eta + \mu^{1/2} \partial_{\nu} \mu^{1/2}) U &= G, \quad \text{on $\partial \Omega$}
\end{aligned}
\end{equation}
for the complex-valued potential
\begin{align} \label{eqn.potential_p}
p = \mu^{-1} - \omega^{-2} \mu^{-1/2} \Delta \mu^{1/2}. 
\end{align}
If $\beta, \gamma\in W^{2,\infty}(\Omega)$, then $p\in L^\infty(\Omega)$. This transformed equation defines an analogous Robin-to-Dirichlet (RtD) map associated with \eqref{eqn.BVP_U},  
\begin{align} \label{eqn.Lambda_p}
\Lambda_p : G \mapsto U|_{\partial\Omega}
\end{align}
where $G = \mu^{1/2} g$. Notice that given the Robin data $G$, the RtD map $\Lambda_{p}$, and the boundary condition in \eqref{eqn.BVP_U}, one can also obtain the Neumann data $\partial_{\nu} U$ as follows,
\begin{align*}
\partial_{\nu} U = \frac{G + (i \omega \lambda - \eta + \mu^{1/2} \partial_{\nu} \mu^{1/2} ) \Lambda_{p} G }{ \gamma - i \omega \beta }.
\end{align*}
Hence, from the RtD map $\Lambda_{p}$, one obtains the Cauchy data on $\partial \Omega$. The unique determination of a complex-valued and bounded potential $p$ from all pairs of Cauchy data has previously been addressed in the literature. At the core of its proof lies the construction of appropriate complex geometric optics (CGO) solutions. As noted in \cite{Lakshtanov2016}, for the 2-dimensional case, uniqueness is obtained from the work in \cite{Nachman1996,Bukhgeim2008} which combines a specific CGO construction along with the stationary phase method, while for dimensions 3 (and higher), as remarked in \cite{Borcea2002EIT,Pombo2022}, the result follows from a standard CGO construction and Fourier inversion from \cite{Sylvester1987,Nachman1988,Nachman1988a,Nagayasu2013}. Since knowledge of the map $\Lambda_{p}$ implies knowledge of the information in the standard DtN or NtD maps, we conclude that the aforementioned results give the injectivity of the map $p\mapsto \Lambda_p$ with $\Lambda_p$ defined in \eqref{eqn.Lambda_p}.

Now we proceed to show the injectivity of the map $\mu^{1/2} \mapsto p$ defined by \eqref{eqn.potential_p} to conclude the unique determination of $\mu^{1/2}$, and consequently of $\beta$ and $\gamma$, from knowledge of $p$. Let us define $\mu = \gamma - i \omega \beta$ and $\tilde{\mu} = \tilde{\gamma} - i \omega \tilde{\beta}$, and assume that $\gamma, \tilde{\gamma}, \beta, \tilde{\beta} \in W^{2,\infty}(\Omega)$ satisfying \eqref{eqn.lemma_unique_lower_bounds}, are such that $\mu = \tilde{\mu}$ on $\partial \Omega$ and 
\begin{align*}
p = \mu^{-1} - \omega^{-2} \mu^{-1/2} \Delta \mu^{1/2} = \tilde{\mu}^{-1} - \omega^{-2} \tilde{\mu}^{-1/2} \Delta \tilde{\mu}^{1/2}.
\end{align*}
Then
\begin{align*}
(\mu^{1/2} - \tilde{\mu}^{1/2}) p = \mu^{-1/2} - \tilde{\mu}^{-1/2} - \omega^{-2} \Delta (\mu^{1/2} - \tilde{\mu}^{1/2})
\end{align*}
or equivalently
\begin{equation} \label{eqn.nu}
\begin{aligned}
\Delta \nu + \omega^{2} ( p + (\mu \tilde{\mu})^{-1/2} ) \nu &= 0 \qquad \text{in $\Omega$} \\
\nu&= 0 \qquad \text{on $\partial \Omega$}
\end{aligned}
\end{equation}
where $\nu = \mu^{1/2} - \tilde{\mu}^{1/2}$. We work in the high-frequency regime as $\omega \to \infty$ where
\begin{align*}
\mu^{-1} &= \frac{1}{\gamma - i \omega \beta} = \frac{i}{\omega \beta} \left( 1 + \mathcal{O}(1/\omega) \right) \\
\mu^{-1/2} &= \frac{1+i}{\sqrt{2}}  \frac{1}{(\omega \beta)^{1/2}} \left( 1 + \mathcal{O}(1/\omega) \right)
\\
\mu^{1/2} &= \frac{1-i}{\sqrt{2}} (\omega \beta)^{1/2} \left( 1 + \mathcal{O}(1/\omega) \right) \\
p &= \frac{i}{\omega \beta} - \frac{1}{\omega^2} \beta^{-1/2} \Delta \beta^{1/2} + \mathcal{O}(1/\omega^3)
\end{align*}
where analogous expressions are valid for $\tilde{\mu}$. Therefore,
\begin{align*}
p + (\mu \tilde{\mu})^{-1/2} &= \frac{i}{\omega \beta} + \frac{(1+i)^2}{2 \omega (\beta \tilde{\beta})^{1/2}}  \left( 1 + \mathcal{O}(1/\omega) \right) + \mathcal{O}(1/\omega^3) \\
&= \frac{i}{\omega \beta^{1/2}} \left( \beta^{-1/2} + \tilde{\beta}^{-1/2} \right) + \mathcal{O}(1/\omega^2).
\end{align*}

As a result, for sufficiently large frequency $\omega=\omega(\gamma,\beta,\gamma_{0},\beta_{0})$, the term $p + (\mu \tilde{\mu})^{-1/2}$ has an imaginary part that is uniformly bounded away from zero. Consequently, the only solution to the BVP \eqref{eqn.nu} is $\nu \equiv 0$. The above arguments prove the following result.

\begin{lemma} \label{Lemma.Harmonic_1}
Let $\Omega \subset \mathbb{R}^d$ for $d=2,3$, be a bounded connected domain with a smooth boundary $\partial \Omega$. Assume that $\beta, \tilde{\beta}, \gamma, \tilde{\gamma} \in  W^{2,\infty}(\Omega)$, and $\lambda, \eta \in L^{\infty}(\partial\Omega)$ are all real-valued functions satisfying \eqref{eqn.lemma_unique_lower_bounds}. Moreover, we assume $\|\alpha\|_{L^\infty(\Omega)}, \|\tilde{\alpha}\|_{L^\infty(\Omega)} < \epsilon$ for a sufficiently small $\epsilon>0$ guaranteeing the existence and uniqueness of solutions to \eqref{eqn.001}-\eqref{eqn.004} according to Theorems \ref{thm.wellpose_forward} and \ref{thm.uniqueness}. Define $\mu = \gamma - i \omega \beta$ and $\tilde{\mu} = \tilde{\gamma} - i \omega \tilde{\beta}$ and assume that $\mu = \tilde{\mu}$ and $\partial_{\nu} \mu = \partial_{\nu} \tilde{\mu}$ on $\partial \Omega$. Denote by $\Lambda^{1}_{\beta,\gamma}$ and $\Lambda^{1}_{\tilde{\beta},\tilde{\gamma}}$ the RtD maps for the first-harmonic data defined by \eqref{eqn.Lambda_mu}.
Then there exists a frequency $\omega_{0}=\omega_{0}(\gamma,\beta,\gamma_{0},\beta_{0})$ such that if $\omega>\omega_{0}$ and $\Lambda^{1}_{\beta,\gamma} = \Lambda^{1}_{\tilde{\beta},\tilde{\gamma}}$, then $\beta=\tilde{\beta}$ and $\gamma=\tilde{\gamma}$.
\end{lemma}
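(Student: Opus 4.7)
The plan is to follow the three-step reduction already outlined in the prose preceding the lemma. First, apply a Liouville-type change of variables to recast the first-harmonic problem as a Schr\"odinger-type equation with a complex potential. Setting $U = \mu^{1/2} u_1$ with $\mu = \gamma - i\omega\beta$, one obtains the boundary value problem \eqref{eqn.BVP_U} with potential $p$ given in \eqref{eqn.potential_p}. Because $\mu = \tilde{\mu}$ and $\partial_\nu \mu = \partial_\nu \tilde{\mu}$ on $\partial\Omega$, the boundary datum $G = \mu^{1/2} g$ and the boundary coefficients appearing in \eqref{eqn.BVP_U} are identical for the two media; hence $\Lambda^{1}_{\beta,\gamma} = \Lambda^{1}_{\tilde{\beta},\tilde{\gamma}}$ translates into equality of the two RtD maps $\Lambda_p = \Lambda_{\tilde{p}}$ for the transformed problem. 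From these, the explicit formula for $\partial_\nu U$ given after \eqref{eqn.Lambda_p} recovers the full Cauchy data on $\partial\Omega$.

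Second, invoke the classical uniqueness theorems for the complex, bounded-potential Schr\"odinger equation from Cauchy data. The regularity $\beta, \gamma \in W^{2,\infty}(\Omega)$ guarantees $p, \tilde{p} \in L^{\infty}(\Omega)$, so the CGO constructions already cited in the text --- Bukhgeim--Nachman in dimension two, Sylvester--Uhlmann in dimension three --- apply and yield $p = \tilde{p}$ in $\Omega$.

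Third, invert the algebraic-differential map $\mu^{1/2} \mapsto p$. Let $\nu = \mu^{1/2} - \tilde{\mu}^{1/2}$ and subtract the two identities \eqref{eqn.potential_p}; a direct manipulation produces the homogeneous Dirichlet problem \eqref{eqn.nu}, where the boundary condition comes from $\mu = \tilde{\mu}$ on $\partial\Omega$. Using the high-frequency expansions listed before the lemma one obtains
\begin{equation*}
\Im \bigl( p + (\mu\tilde{\mu})^{-1/2} \bigr) \geq \frac{c}{\omega} > 0 \qquad \text{uniformly in } \Omega
\end{equation*}
for all sufficiently large $\omega$, where $c>0$ depends on $\beta_0$ and on the $W^{2,\infty}$ norms of $\beta, \tilde{\beta}$. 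Multiplying \eqref{eqn.nu} by $\bar{\nu}$, integrating by parts using $\nu|_{\partial\Omega}=0$, and taking the imaginary part of the resulting identity gives $\omega^2 \int_\Omega \Im\bigl(p + (\mu\tilde{\mu})^{-1/2}\bigr) |\nu|^2\, dx = 0$, which forces $\nu \equiv 0$. Therefore $\mu = \tilde{\mu}$ in $\Omega$, and separating real and imaginary parts yields $\gamma = \tilde{\gamma}$ and $\beta = \tilde{\beta}$.

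The hardest part is the bookkeeping behind the third step: one must track the subleading terms in the expansions of $\mu^{-1}$, $\mu^{-1/2}$, $\Delta\mu^{1/2}$ (and analogously for $\tilde{\mu}$) and verify that the $\mathcal{O}(1/\omega)$ imaginary contribution genuinely dominates the real part and all remainders uniformly in $\Omega$, which in turn relies crucially on the strict lower bound $\beta \geq \beta_0 > 0$ and on the $W^{2,\infty}$ regularity of the coefficients. A secondary concern is ensuring that the CGO-based uniqueness result actually applies to a complex, $\omega$-dependent $L^{\infty}$ potential with Cauchy data of the specific form produced by $\Lambda_p$; this is where the boundary agreement $\mu = \tilde{\mu}$ on $\partial\Omega$ is essential, as it makes the two transformed problems share the same Robin coefficient so that $\Lambda_p = \Lambda_{\tilde{p}}$ really does encode equal Cauchy data in the two media.
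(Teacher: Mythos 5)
Your proposal is correct and follows essentially the same route as the paper: the Liouville transformation to the Schr\"odinger form \eqref{eqn.BVP_U}, reduction of the RtD data to Cauchy data and the cited CGO uniqueness results to get $p=\tilde{p}$, and then the high-frequency expansion showing $\Im\bigl(p+(\mu\tilde{\mu})^{-1/2}\bigr)$ is uniformly positive so that the Dirichlet problem \eqref{eqn.nu} forces $\nu\equiv 0$. You in fact make explicit two steps the paper leaves implicit (the equivalence $\Lambda^{1}_{\beta,\gamma}=\Lambda^{1}_{\tilde\beta,\tilde\gamma}\Rightarrow\Lambda_{p}=\Lambda_{\tilde p}$ via the boundary agreement of $\mu$ and $\partial_\nu\mu$, and the energy identity yielding $\nu\equiv 0$).
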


This lemma states that under some technical conditions of regularity for the coefficients of the Westervelt equation and sufficiently large oscillatory frequency $\omega$, the Robin-to-Dirichlet map $\Lambda_{\beta,\gamma}^{1}$, defined in \eqref{eqn.Lambda_mu}, associated with measuring the first harmonic of the solution at the boundary $\partial \Omega$,  
determines the coefficient of diffusivity $\beta$ and the (squared) wave speed $\gamma$ uniquely in $\Omega$.

\subsection{Second-harmonic measurements}
Let's assume that $\beta$ and $\gamma$ are fixed and known as a result of the inversion of first-harmonic data, and switch to the inverse problem of determining the coefficient of nonlinearity $\alpha$. Therefore, in this subsection we omit the dependence of the second-harmonic component of the acoustic field on $\beta$ and $\gamma$.

The solution to the inverse problem is based on the following observation involving second-harmonic measurements. Recalling the governing system \eqref{eqn.112} for $u_2$, it simplifies to
\begin{equation} \label{eqn.601}
\begin{aligned}
(2 \omega)^2 u_{2}  + \nabla \cdot (\gamma - 2 i \omega \beta) \nabla u_2 &=  (2 \omega)^2 \,  \alpha \, u_{1}^2, \qquad  \text{in $\Omega$},  \\
( \gamma - 2 i \omega \beta ) \partial_{\nu} u_2 - (2i \omega \lambda - \eta) u_2 &= 0, \qquad  \text{on $\partial \Omega$}. 
\end{aligned}
\end{equation}
We define an auxiliary linear problem as follows,
\begin{equation} \label{eqn.605}
\begin{aligned}
(2 \omega)^2 v + \nabla \cdot (\gamma - 2 i \omega \beta) \nabla v &=  0, \qquad  \text{in $\Omega$},  \\
( \gamma - 2 i \omega \beta ) \partial_{\nu} v - (2 i \omega \lambda - \eta) v &= -h, \qquad  \text{on $\partial \Omega$,} 
\end{aligned}
\end{equation}
for some admissible (in terms of regularity) but otherwise arbitrary boundary source $h$. Now multiply \eqref{eqn.601} by $v$, and \eqref{eqn.605} by $u_2$, integrate in $\Omega$ both equations, apply  integration by parts, and plug one into the other to obtain that
\begin{align} \label{eqn.609}
(2\omega)^2 \int_{\Omega} \alpha u_{1}^2  v \, dV = \int_{\partial \Omega} h \, u_2  \, dS = \int_{\partial \Omega} h \, \Lambda^2_{\alpha} g  \, dS
\end{align}
where the Robin-to-Dirichlet map $\Lambda^2_{\alpha}$ is defined as
\begin{align} \label{eqn.610}
\Lambda^2_{\alpha} : g \mapsto u_2|_{\partial \Omega}
\end{align}
for the boundary source $g$ that excites $u_1$ in \eqref{eqn.111}, which in turn excites $u_2$ in \eqref{eqn.601} as an interior source. It only remains to show that the set $\{ u_{1}^2 v \}$ constructed from all admissible boundary sources $g$ and $h$, and frequencies $\omega \geq \omega_{o}$ is dense in the space $H^0(\Omega)$ to conclude that knowledge of the map $\Lambda^2_{\alpha}$ is sufficient to determine the coefficient of nonlinearity $\alpha$. In fact, given another coefficient of nonlinearity $\tilde{\alpha}$, then the analogue of the identity \eqref{eqn.609} can be derived, and the difference between the identities reads
\begin{align} \label{eqn.611}
(2\omega)^2 \int_{\Omega}  ( \alpha - \tilde{\alpha} ) \, u_{1}^2 v \, dV = \int_{\partial \Omega} h \, (\Lambda^2_{\alpha} - \Lambda^2_{\tilde{\alpha}}) g \, dS
\end{align}
valid for all solutions $u_1$ of \eqref{eqn.111} and all solutions $v$ of \eqref{eqn.605}, for arbitrary frequency $\omega \geq \omega_{o}$. Therefore, provided that the set $\{ u_{1}^2 v \}$ is dense in $H^0(\Omega)$, then, the second-harmonic RtD map $\Lambda_{\alpha}^2$ would determine the coefficient of nonlinearity $\alpha$. 

To demonstrate the denseness of the products $\{ u_{1}^2 v \}$ we perform the same Liouville transformation $U = \mu^{1/2} u_1$ where $\mu = \gamma - i \omega \beta$, thus
\begin{align} \label{eqn.621}
\Delta U + \omega^2 p \, U = 0, \quad \text{in $\Omega$, where} \quad
p = \mu^{-1} - \omega^{-2} \mu^{-1/2} \Delta \mu^{1/2}
\end{align}
and similarly we set $V = \mu_2^{1/2} v$ for $\mu_2 = \gamma - 2 i \omega \beta$, and verify that $V$ solves
\begin{align} \label{eqn.625}
\Delta V + (2 \omega)^2 p_2 \, V = 0,
\quad \text{in $\Omega$, where} \quad
p_2 = \mu_{2}^{-1} - (2 \omega)^{-2} \mu_2^{-1/2} \Delta \mu_2^{1/2}
\end{align}
and with $\mu_2^{1/2}$ the principal square root of $\mu_2$. The denseness of $\{ U^2V \}$ (and consequently of $\{ u_{1}^2 v \}$) follows from the construction of CGO solutions to \eqref{eqn.621} and \eqref{eqn.625}. Similarly as in the previous section, we follow \cite{Nachman1996,Bukhgeim2008} for dimension 2 and \cite{Sylvester1987,Nachman1988,Nachman1988a,Nagayasu2013,Pombo2022} for dimension 3. In particular, for dimension $d=3$ as in \cite[Prop. 2.3]{Nagayasu2013} and references therein, for a given $k \in \mathbb{R}^3$, 
one can find $\zeta_1, \zeta_2 \in \mathbb{C}^3$ such that $\zeta_1 \cdot \zeta_1 = \zeta_2 \cdot \zeta_2 = 0$, and $\zeta_1 + \zeta_2 = i k$, while the magnitudes $|\zeta_1|$ and $|\zeta_2|$ can be made arbitrarily large. The corresponding CGO functions of the form $U(x) = e^{\zeta_1 \cdot x/2} ( 1 + \psi_{U}(x) )$ and $V = e^{ \zeta_2 \cdot x} (1 + \psi_{V}(x))$ solve \eqref{eqn.621} and \eqref{eqn.625}, respectively, such that
\begin{align*}
\| \psi_{U} \|_{H^s(\Omega)} \leq \frac{C_1 \omega^2 }{|\zeta_1|} \| p \|_{H^{s}(\Omega)} \quad \text{and} \quad \| \psi_{V} \|_{H^s(\Omega)} \leq \frac{C_2 \omega^2 }{|\zeta_2|} \| p_2 \|_{H^{s}(\Omega)}
\end{align*}
valid for all sufficiently large $|\zeta_1|, |\zeta_2|$, for some constants $C_1, C_2 > 0$ and for some integer $s \geq 2$. As result, as we take $|\zeta_1|, |\zeta_2| \to \infty$, then $U^2 V \to e^{i k \cdot x}$ uniformly in $\Omega$. Since $k \in \mathbb{R}^3$ is arbitrary, then this argument shows the denseness of the products $\{ U^2 V \}$ via the Fourier transform.

The previous analysis leads to the next uniqueness result.

\begin{lemma} \label{Lemma.Harmonic_2}
Let $\Omega \subset \mathbb{R}^d$ for $d=2,3$, be a bounded connected domain with a smooth boundary $\partial \Omega$. Assume that $\beta, \gamma \in W^{2,\infty}(\Omega)$, and $\lambda, \eta \in L^{\infty}(\partial\Omega)$ are all real-valued functions satisfying \eqref{eqn.lemma_unique_lower_bounds}. Moreover, we assume $\|\alpha\|_{L^\infty(\Omega)}, \|\tilde{\alpha}\|_{L^\infty(\Omega)} < \epsilon$ for a sufficiently small $\epsilon>0$ guaranteeing the existence and uniqueness of solutions to \eqref{eqn.001}-\eqref{eqn.004} according to Theorems \ref{thm.wellpose_forward} and \ref{thm.uniqueness}. Denote by $\Lambda_{\alpha}^{2}$ and $\Lambda_{\tilde{\alpha}}^{2}$ the RtD maps for the second-harmonic data defined by \eqref{eqn.610}. There exists a frequency $\omega_0$ such that if $\omega > \omega_{0}$ and $\Lambda_{\alpha}^{2} = \Lambda_{\tilde{\alpha}}^{2}$, then $\alpha = \tilde{\alpha}$.
\end{lemma}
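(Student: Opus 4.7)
The plan is to turn the sketch already laid out in the paragraphs preceding the lemma into a short, self-contained proof, relying on two ingredients: the bilinear identity \eqref{eqn.611} and the density of products of CGO-type solutions. If $\Lambda_\alpha^2 = \Lambda_{\tilde\alpha}^2$, then \eqref{eqn.611} collapses to
\begin{align*}
\int_\Omega (\alpha - \tilde\alpha)\, u_1^2\, v\, dx = 0
\end{align*}
for every admissible boundary datum $g$ exciting $u_1$ via \eqref{eqn.111} and every admissible $h$ exciting $v$ via \eqref{eqn.605}. The task therefore reduces to showing that the set $\{u_1^2 v\}$ spans a family dense enough in $H^0(\Omega)$ to force the compactly supported bounded function $\alpha - \tilde\alpha$ to vanish.

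First I would apply the Liouville transformations $U = \mu^{1/2} u_1$ and $V = \mu_2^{1/2} v$ with $\mu = \gamma - i\omega\beta$ and $\mu_2 = \gamma - 2i\omega\beta$, as in \eqref{eqn.621} and \eqref{eqn.625}, so that $U$ and $V$ solve Schr\"odinger-type equations $\Delta U + \omega^2 p\, U = 0$ and $\Delta V + (2\omega)^2 p_2\, V = 0$. The vanishing integral identity then becomes
\begin{align*}
\int_\Omega (\alpha - \tilde\alpha)\, \mu^{-1}\mu_2^{-1/2}\, U^2 V\, dx = 0,
\end{align*}
and since $\mu$ and $\mu_2$ are non-vanishing and bounded with bounded inverses, it suffices to show density of the products $\{U^2 V\}$ in $H^0(\Omega)$.

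Next I would import the standard CGO construction: for each $k \in \mathbb{R}^d$, pick complex vectors $\zeta_1, \zeta_2 \in \mathbb{C}^d$ satisfying $\zeta_j \cdot \zeta_j = 0$ and $\zeta_1 + \zeta_2 = ik$, with $|\zeta_1|, |\zeta_2|$ as large as desired. In dimension $d=3$ I would follow the Sylvester--Uhlmann approach as adapted by Nagayasu \cite{Nagayasu2013}, and in dimension $d=2$ the Bukhgeim/Nachman construction \cite{Bukhgeim2008,Nachman1996}, to produce interior solutions $U(x) = e^{\zeta_1 \cdot x/2}(1 + \psi_U(x))$ to \eqref{eqn.621} and $V(x) = e^{\zeta_2 \cdot x}(1 + \psi_V(x))$ to \eqref{eqn.625}, with remainder bounds $\|\psi_U\|_{H^s(\Omega)} \lesssim \omega^2/|\zeta_1|$ and $\|\psi_V\|_{H^s(\Omega)} \lesssim \omega^2/|\zeta_2|$ as stated before the lemma. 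Each such interior CGO has a Robin trace on $\partial\Omega$ which, upon undoing the Liouville transformation, provides admissible boundary data $g$ and $h$ in the forward problems, so the identity \eqref{eqn.611} indeed applies to these choices.

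Finally, letting $|\zeta_1|, |\zeta_2| \to \infty$ along a sequence, the product $U^2 V = e^{(\zeta_1 + \zeta_2)\cdot x}(1+\psi_U)^2(1+\psi_V)$ converges to $e^{ik\cdot x}$ uniformly on $\Omega$ (and in particular in $L^2$, which is all we need against the bounded integrand). Passing to the limit yields
\begin{align*}
\int_\Omega (\alpha - \tilde\alpha)\, \mu^{-1}\mu_2^{-1/2}\, e^{ik\cdot x}\, dx = 0 \qquad \text{for every } k \in \mathbb{R}^d,
\end{align*}
so the Fourier transform of the compactly supported $L^\infty$ function $(\alpha - \tilde\alpha)\mu^{-1}\mu_2^{-1/2}$ vanishes identically, forcing $\alpha = \tilde\alpha$ a.e. in $\Omega$. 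The main technical obstacle is a bookkeeping one: the CGO remainder bounds carry an $\omega^2$ factor, so for a \emph{fixed} admissible frequency $\omega \geq \omega_0$ the CGO parameters $|\zeta_j|$ must be taken large enough to absorb those factors; once $\omega$ is held fixed in the regime where Lemmas \ref{Thm.Lemma_v1}--\ref{Thm.Lemma_u_decay}, Theorems \ref{thm.wellpose_forward} and \ref{thm.uniqueness}, and the CGO construction are simultaneously valid, the argument goes through without further subtlety.
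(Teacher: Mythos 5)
Your proposal is correct and follows essentially the same route as the paper, whose proof of this lemma is exactly the discussion preceding it: the integral identity \eqref{eqn.611}, the Liouville transformations \eqref{eqn.621} and \eqref{eqn.625}, CGO solutions whose products $U^2V$ converge to $e^{ik\cdot x}$, and Fourier inversion. Your additional remarks (the explicit weight $\mu^{-1}\mu_2^{-1/2}$, recovering admissible Robin data from the CGO traces, and fixing $\omega$ before sending $|\zeta_j|\to\infty$) are faithful elaborations of the same argument rather than a different approach.
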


\subsection{Simultaneous determination of wave speed, diffusivity and nonlinearity}

Now we proceed to combine the Lemmas \ref{Lemma.Harmonic_1}-\ref{Lemma.Harmonic_2} from the previous two subsections and the definition of the RtD map \eqref{eqn.MainNtD} for boundary data belonging to the first and second harmonics which can be written as
\begin{align} \label{eqn.NtD_final}
\Lambda_{\alpha,\beta,\gamma} = ( \Lambda_{\beta,\gamma}^{1} , \Lambda_{\alpha}^{2})
\end{align}
where $\Lambda_{\beta, \gamma}^{1}$ is defined in \eqref{eqn.Lambda_mu} and $\Lambda_{\alpha}^{2}$ is defined in 
\eqref{eqn.610}. These previous results straightforwardly lead to the main theorem of this paper.

\begin{theorem}[Uniqueness for the inverse problem] \label{thm.uniqueness_inverse}
Let $\Omega \subset \mathbb{R}^d$ for $d=2,3$, be a bounded connected domain with a smooth boundary $\partial \Omega$. Assume that $\beta, \tilde{\beta}, \gamma, \tilde{\gamma} \in  W^{2,\infty}(\Omega)$, and $\lambda, \eta \in L^{\infty}(\partial\Omega)$ are all real-valued functions satisfying \eqref{eqn.lemma_unique_lower_bounds}. Moreover, we assume $\|\alpha\|_{L^\infty(\Omega)}, \|\tilde{\alpha}\|_{L^\infty(\Omega)} < \epsilon$ for a sufficiently small $\epsilon>0$ guaranteeing the existence and uniqueness of solutions to \eqref{eqn.001}-\eqref{eqn.004} according to Theorems \ref{thm.wellpose_forward} and \ref{thm.uniqueness}. Define $\mu = \gamma - i \omega \beta$ and $\tilde{\mu} = \tilde{\gamma} - i \omega \tilde{\beta}$ and assume that $\mu = \tilde{\mu}$ and $\partial_{\nu} \mu = \partial_{\nu} \tilde{\mu}$ on $\partial \Omega$. 
Denote by $\Lambda_{\alpha,\beta,\gamma}$ and $\Lambda_{\tilde{\alpha},\tilde{\beta},\tilde{\gamma}}$ the RtD maps defined by \eqref{eqn.MainNtD}. There exists a frequency $\omega_0$ such that if $\omega > \omega_{0}$ and $\Lambda_{\alpha,\beta,\gamma} = \Lambda_{\tilde{\alpha},\tilde{\beta},\tilde{\gamma}}$, then $\alpha = \tilde{\alpha}$, $\beta = \tilde{\beta}$, and $\gamma=\tilde{\gamma}$ in $\Omega$. 
\end{theorem}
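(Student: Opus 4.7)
The plan is to reduce the simultaneous determination of $(\alpha,\beta,\gamma)$ to the two separate uniqueness results already obtained in Lemmas \ref{Lemma.Harmonic_1} and \ref{Lemma.Harmonic_2}, exploiting the block structure of the Robin-to-Dirichlet map recorded in \eqref{eqn.NtD_final}.

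First I would observe that the hypothesis $\Lambda_{\alpha,\beta,\gamma} = \Lambda_{\tilde{\alpha},\tilde{\beta},\tilde{\gamma}}$ forces componentwise equality: for every admissible $g \in H^s(\partial\Omega)$ (with $s \geq 1/2$ and $g$ small enough to satisfy \eqref{eqn.small_g}, which is ensured by the smallness of $\alpha$ and $\tilde\alpha$ plus taking $\omega$ large), we have $\Lambda^1_{\beta,\gamma} g = \Lambda^1_{\tilde\beta,\tilde\gamma} g$ and $\Lambda^2_{\alpha} g = \Lambda^2_{\tilde\alpha} g$. The first equality concerns only the linear first-harmonic system \eqref{eqn.111}, which is decoupled from $\alpha$. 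Lemma \ref{Lemma.Harmonic_1} then applies directly: with the boundary matching hypotheses $\mu = \tilde\mu$ and $\partial_\nu \mu = \partial_\nu \tilde\mu$ on $\partial\Omega$, and for $\omega$ exceeding the threshold $\omega_{0,1} = \omega_0(\gamma,\beta,\gamma_0,\beta_0)$ of that lemma, we conclude $\beta = \tilde\beta$ and $\gamma = \tilde\gamma$ in $\Omega$.

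Having identified $\beta$ and $\gamma$, the second-harmonic boundary value problem \eqref{eqn.601} for $u_2$ and its counterpart for $\tilde u_2$ now share the same linear differential operator and the same auxiliary test system \eqref{eqn.605}; in particular, the definition of $\Lambda^2_\alpha$ in \eqref{eqn.610}, which implicitly depends on $\beta,\gamma$, is unambiguous for both tuples. I would then invoke Lemma \ref{Lemma.Harmonic_2} with a threshold $\omega_{0,2}$, using the duality identity \eqref{eqn.611} together with the density of the products $\{u_1^2 v\}$ in $H^0(\Omega)$ obtained via complex geometric optics, to conclude $\alpha = \tilde\alpha$ in $\Omega$.

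The only bookkeeping required is that the final frequency threshold in the theorem must be chosen as $\omega_0 = \max(\omega_{0,1}, \omega_{0,2})$, and that $\omega_0$ must also be large enough so that \eqref{eqn.small_g} holds for the boundary data used in testing the RtD maps against both lemmas; this is compatible with the standing smallness hypotheses on $\|\alpha\|_{L^\infty(\Omega)}$ and $\|\tilde\alpha\|_{L^\infty(\Omega)}$. I do not anticipate any serious analytic obstacle: the novelty of the proof is entirely absorbed into Lemmas \ref{Lemma.Harmonic_1} and \ref{Lemma.Harmonic_2}, and the theorem itself is a clean, sequential composition of the two harmonic inversions, made possible precisely because the first-harmonic system is independent of $\alpha$, allowing $\beta,\gamma$ to be peeled off before attacking $\alpha$.
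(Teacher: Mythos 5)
Your proposal is correct and follows exactly the paper's argument: the RtD map splits as $\Lambda_{\alpha,\beta,\gamma}=(\Lambda^1_{\beta,\gamma},\Lambda^2_{\alpha})$ per \eqref{eqn.NtD_final}, Lemma \ref{Lemma.Harmonic_1} is applied first to peel off $\beta,\gamma$ from the $\alpha$-independent first-harmonic system, and Lemma \ref{Lemma.Harmonic_2} then yields $\alpha$, with $\omega_0$ taken as the larger of the two thresholds. The paper presents this combination as immediate, so your slightly more explicit bookkeeping is, if anything, a small improvement in exposition.
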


\section{Discussion}
\label{Section.Discussion}

We first highlight the fact that the results of this paper complement previous works such as \cite{Acosta2022,Kaltenbacher2021b,Kaltenbacher2021a,Kaltenbacher2022a,Kaltenbacher2023,Kaltenbacher2022,Eptaminitakis2024,Kaltenbacher2024b} where the inverse problem for the Westervelt equation is considered under various assumptions. Here we focus on determining all three coefficients of the differential equation (wave speed, diffusivity, nonlinearity) using the whole (complex-valued) RtD map for the measurements corresponding to the first- and second-harmonics. We have only addressed uniqueness. The stability of this inverse problem remains to be analyzed. Our particular choice of boundary probing source \eqref{eqn.002} of the form $g e^{- i \omega t}$ and imposing periodicity conditions in time reduces the problem to a triangular system of elliptic equations for each harmonic. Hence, the mathematical machinery for elliptic inverse problems can be invoked. However, such an approach sacrifices the ability to establish stability or to devise accurate reconstruction algorithms. Future work could take better advantage of the hyperbolicity of the time-dependent Westervelt equation or use data from multiple, increasing frequencies $\omega$ to ensure stability and good resolution as done for instance in \cite{Gang2010,Borges2017} for inverse scattering problems. The authors are currently investigating this point numerically. As soon as meaningful results are obtained, they will be reported in an appropriate venue.

We acknowledge that we purposely employed complex-valued solutions to the Westervelt equation. This made it possible for the system of harmonic components to be triangular, ie., lower harmonics are independent of higher harmonics. As a result, the first-harmonic component does not depend on the coefficient of nonlinearity $\alpha$. This property simplifies the analysis considerably and conforms to the experimental motivation to generate images based on the nonlinear behavior \cite{Anvari2015,Hedrick2005,Thomas1998}. However, complex-valued solutions do not have a direct physical interpretation since taking their real part does not in general render a solution to the Westervelt equation \cite{Kaltenbacher2021}. Therefore, the simultaneous determination of the wave speed, diffusivity and nonlinearity for this Westervelt model from boundary traces of real-valued solutions remains an open problem.

\bigskip

\section*{Acknowledgment}
The work of B. Palacios was partially supported by Agencia Nacional de Investigaci\'on y Desarrollo (ANID), Grant FONDECYT Iniciaci\'on N$^\circ$11220772. The work of S. Acosta was partially supported by NIH award 1R15EB035359-01A1.
S. Acosta would like to thank the support and research-oriented environment provided by Texas Children's Hospital.

\bigskip

\bibliographystyle{plain}
\bibliography{library}


\end{document}